\newcommand{\idc}{\bm{1}}
\newcommand{\N}{\mathcal{N}}
\newcommand{\M}{\mathcal{M}}
\newcommand{\Ex}{\mathbb{E}}
\newcommand{\pr}{\mathbb{P}}
\newcommand{\bP}{\mathbf{P}}
\newcommand{\p}{\mathbf{P}}
\newcommand{\PI}{\boldsymbol{\pi}}
\newcommand{\bX}{\mathcal{X}}
\newcommand{\x}{\mathbf{x}}
\newcommand{\R}{\mathbb{R}}
\newtheorem{theorem}{Theorem}
\newtheorem{definition}{Definition}
\newtheorem{lemma}{Lemma}
\newtheorem{proposition}{Proposition}
\newtheorem{conjecture}{Conjecture}
\title{Simulated Annealing for Optimal Resource Allocation in Wireless Networks with Imperfect Communications}
\author{%
  \IEEEauthorblockN{Jaewook Kwak}
%  \IEEEauthorblockA{ETH Z체rich\\
%                    ISI (D-ITET), ETH Zentrum\\
%                    CH-8092 Z체rich, Switzerland\\
%                    Email: moser@isi.ee.ethz.ch}
  \and
  \IEEEauthorblockN{Ness B. Shroff}
%  \IEEEauthorblockA{Hogwarts School of Witchcraft and Wizardry\\
%                    Hogwarts Castle\\ 
%                    1714 Hogsmeade, Scotland\\
%                    Email: \{dumbledore, potter\}@hogwarts.edu}
}
\begin{document}

\maketitle

\begin{abstract}

Simulated annealing (SA) method has had significant recent success in designing distributed control algorithms for wireless networks.
These SA based techniques formed the basis of new CSMA algorithms and gave rise to the development of numerous variants to achieve the best system performance accommodating different communication technologies and more realistic system conditions.
However, these algorithms do not readily extend to networks with noisy environments, as unreliable communication prevents them from gathering the necessary system state information needed to execute the algorithm.
In recognition of this challenge, we propose a new SA algorithm that is designed to work more robustly in networks with communications that experience frequent message drops.
The main idea of the proposed algorithm is a novel coupling technique that takes into account the external randomness of message passing failure events as a part of probabilistic uncertainty inherent in stochastic acceptance criterion of SA. 
As a result, the algorithm can be executed even with partial observation of system states, which was not possible under the traditional SA approach.
We show that the newly proposed algorithm finds the optimal solution almost surely under the standard annealing framework while offering significant performance benefits in terms of its computational speed in the presence of frequent message drops.

%These algorithms can be realized in such distributed networks by implementing a suitable message passing mechanism, just as any distributed computing system 

\end{abstract}

\let\ORIGthefootnote\thefootnote
\renewcommand{\thefootnote}{}
\footnotetext{The authors are with the Department of Electrical and Computer Engineering, The Ohio State University, Columbus, OH, Email: \{kwak.84, shroff.11\}@osu.edu. This work has in part been funded by: a grant from the Defense Thrust Reduction Agency (DTRA) HDTRA1-14-1-0058; and from the National Science Foundation grants CNS-1446582 and CNS-1518829.}

\section{Introduction}

In modern wireless network systems, many network functionalities involve solving complex network-wide decision problems.
Example network problems include media access control, routing optimization, resource allocation, and QoS provisioning in wireless networks, etc.
A common goal pursued in these problems is to achieve the desired performance objective by seeking the best configuration of a set of system parameters. 
This requirement naturally leads to form a certain combinatorial optimization problem to be solved in distributed settings. However, these problems are often very difficult and high-dimensional such that their complexity grows rapidly with the size of the network.
%This requirement naturally leads to form a certain combinatorial optimization problem which is often very difficult and high-dimensional that prevents development of efficient algorithms...

In this paper, we consider an important class of optimization problems that are primarily motivated by resource allocation and link scheduling problems in wireless networks. 
A classical example of such problems is the max-weight or weighted sum rate maximization problems, which serves as a basis for many resource management and network design problems.
These problems are typically difficult to solve, and are in general known to be NP-hard even in the simple binary capacity model.
In addition, emerging wireless communication technologies employ increasingly complex adaptive modulation and coding techniques, which further exacerbate the complexity of these problems. 
%Furthermore, emerging wireless communication technologies are getting to employ more and more complex channel coding and adaptive modulation schemes, which in turn leads to gaining additional complexity to the problem. 
We focus on a class of NP-hard type resource allocation problems which are often intractable to solvein an efficient way and even in a centralized manner.
%Unlike traditional approaches that rely on the special properties of the problem formulation, the approach we present in this paper is applicable to a variety of problems with complex formulations, leading to an approach that is versatile and implementable in a wide range of practical wireless network systems.

The solution methodology we develop in this paper is based on the classical Simulated Annealing (SA) method \cite{kirkpatrick1983optimization}, which is a randomized technique for approximating the optimum for a given objective function. 
The algorithmic procedure of SA is intuitive and simple. 
In each step, a trial state is randomly generated and its performance objective is evaluated. If the trial state improves the objective, the current state is replaced by the new state. If the objective of the trial state is not better than that of current one, the trial is accepted or rejected based on a certain probabilistic criterion.
The advantages of SA are the relative ease of implementation and the ability to provide good solutions with provable guarantees for any arbitrary systems and objective functions.
Since SA is such a ubiquitous method, it has found wide-spread applications in various engineering problems \cite{ogbu1990application, tian1999application, gong2012community}.

An integral step needed to realize SA in practical systems is the correct evaluation of the performance objective on each system state, or at least the performance differential between the current and trial states.
In a distributed network where the performance objective is dependent on multiple system variables across different nodes, the task of measuring the objective differentials can be done by implementing a proper message passing mechanism.
For wireless resource allocation problems, to which SA is applied, most works implicitly assume that these message exchanges are perfect.
However, since wireless communication is inherently unreliable (e.g., due to fading and interference, etc.), the message transmissions containing the information about evaluating the objective may not always be successful, resulting in failure of acquisition of the information at the intended time of operating the algorithm.
Our numerical evaluation reveals that a straightforward solution using SA to circumvent this problem performs very poorly in terms of its computational speed, and thus appears to be far from being practical in a situation where the message drop rate is high.
The main purpose of this paper is to develop an efficient way of implementing the SA algorithm for wireless networks even under a physical channel that experiences frequent message drops.

The main contributions of this paper are as follows.
\begin{enumerate}
\item We investigate an important performance issue that arises from the unreliable nature of wireless communications in implementing the SA algorithm in general distributed wireless networks.
\item We propose a new algorithmic approach that can deal more efficiently with an impact from the imperfect communications, and rigorously prove the \emph{optimality} of the proposed algorithm under the standard SA framework.
\item We demonstrate that the proposed algorithm offers significant improvement in terms of its computational speed in networks with high message drop rates.
\end{enumerate}

We organize this paper as follows. First, we provide a brief overview of related work in Section \Romannum{2}, and some preliminaries in Section \Romannum{3}.
In Section \Romannum{4}, we describe the detailed implementation structure of an algorithm that is based on the SA approach, and describe the main problem we focus on in this paper.
In Section \Romannum{5}, we present our new idea to deal with the problem, along with a mathematical analysis for the optimality and efficiency of our solution. 
Section \Romannum{6} provides numerical evaluations that support our main arguments. In Section \Romannum{7}, we discuss some practical considerations and conclude the paper in Section \Romannum{8}.

\section{Related Work}

The scope of this paper is closely related to the problem of designing wireless link scheduling algorithms.
In particular, the message issue we have introduced in the previous section is of importance in the development of recently studied CSMA-type distributed algorithms \cite{NTS10QCSMA, JW10DC, SRS09}.
We provide a brief overview of previous works, and emphasize again the significance of our contributions in this context.

Recently, a suite of CSMA-type algorithms have gained a lot of attension in the research community.
These algorithms are known to be throughput optimal and can be easily implemented in a distributed manner requiring minimal message overheads. 
The key enabler of this success is the utilization of an SA-like algorithm to solve the max-weight problem.
While the goal of achieving throughput optimality is to generate a sequence of schedules such that the long-term service rates can support any feasible arrival rates, the task of solving max-weight problem plays a critical role in this job and it can be indeed leveraged to achieve optimality.

We should point out that the type of messaging used in these algorithms depends on which capacity model is used in their problem setting.
In earlier works \cite{SRS09, JRJ10CDC, JW10DC, NTS10QCSMA}, the algorithms are typically developed under a simple binary capacity model:
each link can be either active or inactive, where activation of two links at a close distance leads to collision, i.e., both transmissions fail. 
%This resource condition model can be naturally captured by a graph of an independent set that consists of vertices which cannot be activated simultaneously. 
In this model, there is few restrictions on the way in which the necessary information is collected.
This is because the only information needed to decide whether to activate a link's transmission is to know whether any of its neighboring links (the set of links that interfere with it) is active. This can be easily done by having each active neighboring link send a one-bit signal\footnote{Some overheads such as headers and/or guard times may be necessary depending on the types of practical systems, as in \cite{NTS10QCSMA}.} on a predefined and commonly shared time slot in order to convey its activity state, and the link simply detects the presence of the combined signal.

However, this information acquisition scheme may not be able to be used on other more realistic capacity models. One such an example is the Signal-to-Noise Ratio (SINR) model, in which links obtain capacity proportional to the ratio of their signal strength to the interference experienced in their receiver. The reason is that in this model there may not exist a clear condition that distinguishes between collision and not collision, but the degree of capacity degradation caused by the activation of other links may be different for different links depending on their transmission power mode, geographical distance between them, and etc. In this case, more detailed information about the capacity degredation from different nodes may have to be collected individually. Indeed, this increased message complexity can be a critical source of the incomplete message acquisition problem as we will explain later.

There are a few works that have extended the CSMA algortihms to the SINR model case \cite{swamy2017adaptive, SINR13TON}. 
However, these works are restricted to the use of a \emph{threshold-type} capacity model, i.e., a link obtains a unit capacity if its SINR is above a certain threshold, and zero otherwise. 
This condition is a critical assumption that allows to use the above mentioned information acquisition scheme and avoids the message complexity problem. This capacity model, however, does not allow the wireless nodes to use adaptive modulation and coding techniques to increase data rates for higher SINR.
%For this reason, it is not a straightforward job to extend them to more general capacity models.

In \cite{borst2011distributed} and \cite{qian2010globally}, the authors have considered general capacity models, not restricted to threshold-type ones. 
However, they ignore the message complexity issue, and assumed that all suitably defined local information needed to perform their algorithms is readily available at the time of operating the algorithm. In this paper, we do not assume such an oracle, but explicitly consider the impact due to imperfect collection of required information, and develop a solution to the problem.

It is also worthwhile to mention that the delay performance of these algorithms in queueing systems is highly affected by their computational speed. According to the standard queueing theory \cite{BigQueues, Bremaud03}, the correlation on arrival and/or service processes has an adverse impact on the queueing delay. 
As we will show later, our new solution is very helpful in improving the algorithm operation speed, which in turn generates more rapidly evolving and less correlated link service processes in comparison to a naive approach.
%As we will show later, our new solution approach is very helpful in improving the algorithm operation speed and thus generating less variable link service processes in comparison to a naive approach.
From this view point, the significance of the aforementioned contributions in the scheduling problem can be translated into the fact that our proposed algorithm, applied to max-weight type problems with any general capacity model, guarantees throughput-optimality while reducing delay performance degradation due to the imperfect communications.
However, aside from this significant merit, achieving faster computational speed to generate an equivalent solution is evidently desirable in designing this type of randomized algorithms for many applications.

%\section{Model and Problem Description}
\section{Preliminaries}

%In this section, we present our system model and describe the main problem under consideration.

\subsection{System model and objective}

\textbf{Network model.} 
We consider a wireless network consisting of a set $\N$ of $n$ communication links (transmitter-receiver pairs). 
%Time is slotted, and the slot is chosen as the unit of time.
Each link-$i$ transmitter node has its local parameter $x_i$ that determines its transmission power level from a discrete set $\{0,\ldots,P^{\max}\} \triangleq \M$. Let $\x = \{x_1,\ldots, x_n\}$.
Links interfere with each other such that a transmission of one link is treated as interference at other links. 
We consider the SINR-based interference model. 
That is, when each link $i$ sends a signal with its power level $x_i$, the receiver of each link $i$ attains its SINR level, $\gamma_i(\x) = \frac{g_{ii}x_i}{\sum_{j \neq i} g_{ji}x_j + n_0}$, where $g_{ij}$ is the channel gain from link-$i$ transmitter to link-$j$ receiver, and $n_0$ is the thermal noise. The link $i$ then obtains its transmission rate $c_i(\gamma_i(\x))$ as a function of the experienced SINR level, which is a typically monotone function, such as $\log(1+\gamma_i(\x))$.
%W\log\left(1+ \frac{g_{ii}x_i}{\sum_{j \neq i} g_{ji}x_j + n_0} \right)$, where $W$ is the bandwidth of the channel, $g_{ij}$ is the channel gain from link-$i$ transmitter to link-$j$ receiver, and $n_0$ is the thermal noise. 
%In principle, any transmitting link contributes interference to any other receiver in the network under the SINR model. 
%However, if links are located sparsely over a geographical region and the channel gain quickly decreases with the distance between a receiver and an interfering transmitter, then it is reasonable to assume that the interference from links that are far away can be ignored. 
%In our model, we will consider for link-$i$ capacity that the only interference from the links that are sufficiently close to link $i$
%In specific, we define a neighbor set $N_i$ for each link $i$ such that a link $j$ belongs to the neighbor set $N_i$ if link-$j$ transmitter is located within a given radius $r_0$ of link-$i$ receiver, and assume $c_i(\x) = W\log\left(1+\frac{g_{ii}x_i}{\sum_{j \in N_i} g_{ji}x_j + n_0}\right)$.
Let $\bX \triangleq \M^n$, and call an instance $\x = \{x_1,\ldots, x_n\} \in \bX$ \emph{configuration}. We denote by $c(\x) = \{c_i(\x)\}_{i \in \N}$ a capacity vector with configuration $\x$. We will also use $\x_{[S]} = \{x_i\}_{i \in S}$, for $S \subseteq \N$ to denote a subset $S$ of configuration $\x$.

%We define by $\bX$ the Cartesian product of the set $\M$ over the $n$ nodes, and call the product space $\bX=\M^n$ \emph{state space}. An instance $\x = \{x_1,\ldots, x_n\} \in \bX$ is called \emph{network configuration} or simply \emph{configuration}. We will use $\x_{[S]} = \{x_i\}_{i \in S}$, for $S \subseteq \N$ to denote a subset $S$ of configuration $\x$.

\textbf{Main objective.}
We require that each link controls its transmission power level in order to achieve a certain performance objective.
Specifically, we aim at designing a distributed algorithm that makes decisions $\x(t) \in \bX$ so that the long-term time proportion of the configuration converges to a solution to
\begin{eqnarray*}
\mbox{(\textbf{OPT-MW})} \quad \text{maximize}_{\x \in \bX} \quad
\sum_{i \in \N} w_i c_i(\x),
\end{eqnarray*}
where $w_i$ is a weight of link $i$. In the following, we call the pair of product $w_ic_i(\x) \triangleq f_i(\x)$ \emph{performance objective}, $f_i:\bX \rightarrow \R$, of link $i$ associated with each configuration $\x \in \bX$.

%This mathematical problem serves as a basis for many resource management and network design problems.

A significant motivation for considering \textbf{OPT-MW} is its relevance to the throughput-optimality in queueing systems.
To be more specific, suppose that each link maintains a queue fed by an exogenous packet arrival process.
In \cite{neely2003power}, it was shown that if in each time slot, a configuration is selected according to the above max-weight rule, where the weight is queue size, then the queues can be stabilized (keeping all link queues finite) for all arrival vectors that are within the capacity region determined by the convex combination of capacity vectors with all possible configurations.
While in our problem setting the weight parameters are assumed to be constant, an algorithm that solves \textbf{OPT-MW} with large enough weights can be shown to be throughput-optimal based on the time-scale separation assumption, and the assumption can be relaxed by the recent queue-based adaptation schemes \cite{SRS09, JRJ10CDC}.

%lie within the capacity region $\mathbb{C}$ such that  $\bm{a} \in \mathbb{C} \triangleq \text{Convex\_Hull}\{c(\x): \x \in \bX\}$.
%\begin{eqnarray*}
%\x^*(t) = \text{argmax}_{\x \in \bX}  \sum_{i \in \N} Q_i(t) c_i(\x(t)).
%\end{eqnarray*}

%More generally, 
%\begin{eqnarray*}
%\mbox{(\textbf{OPT-NS})} \quad \text{minimize} \quad
%\sum_{i \in \N} f_i(\x)
%\end{eqnarray*}

In general, \textbf{OPT-MW} is known to be an NP-hard problem, and therefore it is unlikely that there exists an efficient algorithm to solve it even in a centralized manner.
Our solution approach to the problem is to utilize the \emph{simulated annealing} method, which is known to guarantee to find the optimal solution with high probability in a certain asymptotic sense even for NP-hard problems.

%\emph{Remark:} We note that the focus of this paper is on solving \text{OPT-MW} using SA in consideration of some practical limitations of wireless networks.
%Although we strongly believe that there is a close resemblance between the network adiabatic theorem \cite{SRS09} and SA, showing a rigorous proof for the throughput optimality with the queue-based adaptation is beyond the scope of this paper.

%The above problem can be a NP-hard problem depending on the network topology and the objective function $f$ used. Therefore in general it is unlikely that there exists an efficient algorithm to solve it even in a centralized manner.
%Our solution approach to the problem is to utilize the \emph{simulated annealing} method, which is, applied to any NP-hard problem, known to guarantee to find the optimal solution with high probability in a certain asymptotic sense. 

\subsection{Simulated annealing}

Central to the idea of simulated annealing is the Metropolis Hastings (MH) algorithm, which is a Monte Calro Markov Chain (MCMC) method that can be used for obtaining a sequence of samples from a given probability distribution. 
We here briefly review the MH algorithm and its relation to simulated annealing to solve \textbf{OPT-MW}.

Consider an irreducible Markov chain $X_t$ with a finite state space $\Omega$ and its transition probability matrix $\p = \{P_{ij}\}_{i,j \in \Omega}$.
Let $\PI = \{\pi\}_{i \in \Omega}$ be a probability distribution over the state space.
The MH algorithm is intended to obtain a transition probability matrix $\p$ that has $\pi$ as its stationary distribution while satisfying the reversibility condition, i.e., $\pi_i P_{ij} = \pi_j P_{ji}$.
The details of the MH algorithm are described as follows. At the current state $i$ of $X_t$, the next state $X_{t+1}$ is proposed with a probability with \emph{proposal} distribution $c_{ij}$ - the state transition probability of an arbitrary irreducible Markov chain on the same state space, where $c_{ij} > 0$ if and only if $c_{ji} > 0$. The proposed state transition is accepted with probability $\alpha_{ij} = \text{min}\left\{1, \frac{\pi_j c_{ji}}{\pi_i c_{ij}} \right\}$, and is rejected with probability $1-\alpha_{ij}$. Therefore, the transition probability $P_{ij}$ is given by $P_{ij} = c_{ij} \alpha_{ij} = \text{min}\{ c_{ij}, c_{ji} \pi_j / \pi_i \}$,
%\begin{equation*}
%    P_{ij} = c_{ij} \alpha_{ij} = \text{min}\{ c_{ij}, c_{ji} \pi_j / \pi_i \}
%\end{equation*}
for $i\neq j$, and $P_{ii} = 1 - \sum_{j \neq i } P_{ij}$. 
When the proposal distribution is symmetric, i.e., $c_{ij} = c_{ji}$ for all $i,j \in \Omega$, the form of transition probabilities reduces to $P_{ij} = c_{ij} \min\{1, \pi_j / \pi_i\}$.

As an application of the MH algorithm, an important class of probability distribution to be used for solving combinatorial optimization problems is \emph{Boltzmann-Gibbs distribution}, which is typically constructed for \textbf{OPT-MW} by
\begin{equation} \label{eq:stationary}
    \pi(\x) = \frac{1}{Z} e^{\beta f(\x)}, \quad \x \in \bX,
\end{equation}
where $f(\x) = \sum_{i \in \N} f_i(\x)$, $Z$ is the normalization constant: $Z = \sum_{\x' \in \bX} e^{\beta f(\x')}$, and $\beta > 0$ is a parameter related to capturing the trade-off between optimality and convergence speed. Evidently, as $\beta$ becomes large, the probability distribution will be concentrated on the set of optimal solutions $\bX^* := \{ \x \in \bX : f(\x) = \max_{\x' \in \bX} f(\x') \}$. 
In this form of $\PI = \{\pi(\x)\}_{\x \in \bX}$, the constructed transition probabilities by the MH algorithm can be written as $P(\x,\x') = c(\x,\x') e^{-\beta [f(\x) - f(\x')]^+}$, for $\x,\x' \in \bX$ $(\x \neq \x')$ and $P(\x,\x) = 1 - \sum_{\x' \in \bX} P(\x,\x')$, given that a suitably defined proposal distribution $c(\x,\x')$ is symmetric.

The simulated annealing is an adapted version of the MH algorithm. 
%The original SA algorithm operates with a global time-varying parameter called \emph{temperature} which plays a crucial role in controlling the evolution of the system state with regards to its sensitivity to the variations of performance objective.
The most distinct feature of SA is that it allows $\beta$ to increase monotonically in time, but with sufficiently slowly varying rate, in order to guarantee the convergence to the optimal solution in a certain probabilistic sense. 
The time-varying parameter $T(t) = 1/\beta(t)$ is often referred to as the \emph{temperature} at time $t$, and the sequence of $T(t)$ is called \emph{cooling schedule}.
Many proofs of convergence of cooling schedules have already appeared in the literature~\cite{connors1989simulated,hajek1988cooling}. We defer discussion of this topic in Section \ref{sec:rsa}-B.
%In many practical network scenarios, it is recommended to use a finite...

\section{The Implementation and The Challenge}

\subsection{Implementation structure}

\algnewcommand\algorithmicpick{\textbf{Pick phase:}}
\algnewcommand\PICK{\item[\algorithmicpick]}
\algnewcommand\algorithmictrain{\textbf{Train phase:}}
\algnewcommand\TRAIN{\item[\algorithmictrain]}
\algnewcommand\algorithmicmessaging{\textbf{Messaging phase:}}
\algnewcommand\MESSAGING{\item[\algorithmicmessaging]}
\algnewcommand\algorithmicdecision{\textbf{Decision phase:}}
\algnewcommand\DECISION{\item[\algorithmicdecision]}
\algnewcommand\algorithmicdecisionati{\textbf{Decision phase:} (at node $i$)}
\algnewcommand\DECISIONatI{\item[\algorithmicdecisionati]}

\algnewcommand\algorithmicinput{\textbf{Input:}}
\algnewcommand\INPUT{\item[\algorithmicinput]}

\begin{algorithm}[t] 
    \caption{Basic SA (BSA) Algorithm (in time slot $t$)} \label{alg:basic}
    \begin{algorithmic}[1]
    \PICK 
    \State The network selects a link $i \in \N$ u.a.r.
    \State The link $i$ chooses $x_i(t) \in \M \backslash \{x_i(t-1)\}$ u.a.r.
    \State Set $x_j(t) = x_j(t-1)$, $\forall j \in \N \backslash \{i\}$.
    \TRAIN 
    \State Test the new configuration $\x(t)$.
    \State Every link $j \in N$ locally measures $f_j(\x(t))$.
    \State Set $\Delta_j = f_j(\x(t)) - f_j(\x(t-1))$.
    \MESSAGING
    \State Each link $j \in N_i$ sends $\Delta_j$ to link $i$.
    \DECISIONatI
%    \State{Node $i$ :}
    \State{Set $\Delta = \Delta_i + \sum_{j \in N_i} \Delta_j$. } 
    \State{\algorithmicif \; $\Delta \leq 0 $ \; \algorithmicthen \; $x_i(t) = x_i(t-1)$ w.p. $1-e^{\beta \Delta}$ }
    \end{algorithmic} 
\end{algorithm}

Realizing the SA idea in a distributed network requires a considerable attention since the specific implementation in practical networks will differ greatly depending on the characteristics and the constraints of the network systems.
We present our implementation structure of the SA idea to be performed in the SINR model.

In our implementation, time is divided into discrete time slots where each time slot $t$ consists of four phases which include pick, training, messaging, and decision. 
In the pick phase, the network selects a link $i \in \N$ uniformly at random (u.a.r.).
%In the pick phase, the network selects a node $i \in \N$ uniformly at random, and the selected node generates a new power level state $x'_i \in \M$.
The task of selecting a random link can be done in a distributed manner by having each link trigger an independent poisson clock with a unit rate over continuous time domain, and by suitably defining a time slot as an interval of each clock tick.
The selected link generates a new power level state $x_i(t)$ u.a.r. different from its previous state $x_i(t-1)$.
The newly generated configuration $\x(t)$ is then tested by having each transmitter node transmit a test signal with the selected power level, and the receiver node of each link measures its performance objective. Each receiver node then constructs a message containing the objective differential - the measured quantity subtracted by that of previous time slot - and transmits it to the transmitter node of link $i$ during the messaging phase.
%We assume that the delivery of the messages is perfect, where we will later consider a situation where the messages may not reach the node successfully.
Upon receiving the messages, the link $i$ decides whether to accept the new power level state or not, based on the received information and the previously described MH algorithm to achieve $\PI$ in Eq (\ref{eq:stationary}).

%One difficulty in realizing the above idea is that the message overhead during the messaging phase can be very large because of the global dependency of every performance objective on all the variables in the network under the SINR model.
%In principle, any transmitting link contributes interference to any other receiver in the network. 
Note that if links are located sparsely over a geographical region and the channel gain quickly decreases with the distance between a receiver and an interfering transmitter, then it is reasonable to assume that the interference from links that are far away can be ignored. 
%We therefore only consider the interference to link-$i$ are from the links that are sufficiently close to link $i$.
Specifically, we define a neighbor set $N_i$ for each link $i$ such that a link $j$ belongs to the neighbor set $N_i$ if link-$j$ receiver is located within a given radius of link-$i$ transmitter, and will consider only those links in the neighbor set as the primary sources of the interference. 
Therefore, each link-$i$ transmitter node only needs to collect information from its neighboring links $j \in N_i$ during the messaging phase.
The detailed algorithm is outlined in Algorithm~\ref{alg:basic}.

At first glance, our implementation appears to be similar to the standard PICK-and-COMPARE methods as introduced in \cite{Eryilmaz2006, hwlee2012, Modiano2006Gossip}. The main idea of the previous approaches is to have \emph{every} node generate its new random power level, and compare its objective value with that of the previous power allocation. If the new power allocation improves the objective value, then the new allocation is accepted to use in the next time slot, and if otherwise, remains to use the previous one. However, in multi-hop wireless networks, this comparison task is very challenging because it requires to compare the network-wide weighted-sum rates achieved by the two power allocation. To this end, they adopted a gossip-like algorithm, however, the computation of each power allocation using the gossip algorithm requires up to $O(n^3)$ information exchange, which may not be easily implementable for large networks. On the other hand, we do not require such a network scale comparison, as we perform the comparison task at link level. That is, we propose to change only a single state at a time, which makes the computation of the objective differential easy and suitable to be implementable in a distributed manner. 
%where the newly generated state is only accepted by the stochastic criterion of SA to avoid getting trapped in a local minima.

\subsection{The challenge with imperfect communications}

We have described the basic SA algorithm based on the assumption that the message containing the objective differential locally measured at each node is perfectly delivered to the intended node during the messaging phase. In practice, however, the delivery of messages may not always be successful, and there can be several reasons that can prevent the message delivery from being successful. 

1) \emph{Fading.} A primary reason for the delivery failure is due to the inherently unreliable nature of wireless communications. 
In wireless communications, the transmission channel suffers from temporal variations in its condition with various variables, and this can often result in a great amount of signal attenuation and message decoding errors.

2) \emph{Message complexity.}
When the network experiences frequent events of join and leave of nodes, it may not be easy for each node to find a proper coordination in a deterministic way for receiving multiple messages from different neighbors. To deal with such a potential dynamics, an \emph{Aloha}-type of randomized neighbor discovery method can be used as an alternative, e.g. \cite{vasudevan2013efficient}.
One way to do is to allocate multiple sub-slots during the messaging phase, and in each sub-slot, nodes transmit their message with some probability. 
In this way, nodes can deliver messages while avoiding collisions in a randomized fashion. However, there is always a chance that the delivery of messages may not be successful, since the number of sub-slots is finite and fixed.

%Including these apparent reasons, we capture various factors that can cause message drops by means of probability to represent the combined effect.
We capture various factors that can cause message drops by means of probability to represent the combined effect.
In specific, we assume that in each time $t$, the selected node $i$ is only able to collect a subset $S(t)$ of nodes from its neighbors $N_i$ with some unknown probability $q_{i,S(t)}$, $S(t) \subseteq N_i$, which is $i.i.d.$ over time slots, where the probability of collecting the full set information is assumed to be non-zero, i.e., $q_{i,N_i} > 0$.
%Then, we are within the problem context described above.

This limited capability of the message passing poses the following practical challenge: when the subset $S(t)$ of information collected at time $t$ is strictly smaller than $N_i$, the node $i$ cannot compute the state transition probability correctly, and therefore it is unclear how to behave in this time slot. 
A straightforward idea to deal with the problem is as follows.
If the intended node gathers all the information from the full set of its neighbors successfully, then the node performs Algorithm 1. And, if otherwise, it defers performing the algorithm and simply maintains the current state (Algorithm \ref{alg:naive}).

\algnewcommand\algorithmicmsginput{\textbf{Message Input :}}
\algnewcommand\MSGINPUT{\item[\algorithmicmsginput]}

\begin{algorithm} 
    \caption{Lazy SA (LSA) algorithm (at node $i$ in time $t$)} \label{alg:naive}
    \begin{algorithmic}[1]
    \MSGINPUT $S(t) \subseteq N_i$ \text{and} $\{\Delta_j\}_{j \in S(t)}$.
    \DECISION
    \State{\algorithmicif \; $S(t) \equiv N_i$ \algorithmicthen \; perform (8-9) in Algorithm 1} 
    \State{\algorithmicelse \; $x_i(t) = x_i(t-1)$ }
    \end{algorithmic}
\end{algorithm}

%This algorithm only modifies the decision phase in the basic algorithm, and hence we only describe the newly constructed decision phase in Algorithm \ref{alg:naive}.
We verify that this algorithm has its stationary distribution as $\PI$ in Eq. (\ref{eq:stationary}), of which proof is in Appendix.
%To be described later, the proof for an another approach we will present in the next section covers more general cases, and that for LSA algorithm can be cast to a special instance of the generalized proof. Here, we only show that when the proposal distribution of choosing the new configuration is symmetric, this algorithm achieves the same stationary distribution, from which the optimality immediately follows.

\begin{proposition} \label{lem:LSA_equiv} \! The stationary distribution of LSA algorithm is $\PI$.
\end{proposition}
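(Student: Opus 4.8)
The plan is to show that the LSA-induced Markov chain is reversible with respect to $\PI$, which (together with irreducibility, inherited from the BSA chain) forces $\PI$ to be the unique stationary distribution. The strategy is to write down the one-step transition probability $\tilde P(\x,\x')$ of LSA for $\x \neq \x'$ explicitly, conditioning on the random collected subset $S(t)$, and then verify the detailed-balance equation $\pi(\x)\tilde P(\x,\x') = \pi(\x')\tilde P(\x',\x)$. Reversibility is the natural target here because BSA was itself constructed via Metropolis-Hastings to be reversible with respect to $\PI$, and LSA only modifies the acceptance behavior on the message-drop events, so I expect the drop probabilities to factor out cleanly.

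First I would fix a neighbor-set $N_i$ and observe that a transition $\x \to \x'$ with $\x \neq \x'$ can only occur when the pick phase selects link $i$ (the unique link whose coordinate changes) and proposes the value $x_i'$; call this proposal probability $c(\x,\x')$, which is symmetric, i.e. $c(\x,\x') = c(\x',\x)$, since a link is picked uniformly and its new level uniformly among $\M\backslash\{x_i\}$. Conditioned on this proposal, LSA accepts only when the \emph{full} neighbor set is collected, $S(t) \equiv N_i$, an event of probability $q_{i,N_i}$ independent of the configuration, and in that case applies exactly the BSA acceptance rule $e^{-\beta[f(\x)-f(\x')]^+}$. Hence
\begin{equation*}
\tilde P(\x,\x') = q_{i,N_i}\, c(\x,\x')\, e^{-\beta [f(\x) - f(\x')]^+}.
\end{equation*}
On the partial-collection events $S(t) \subsetneq N_i$, LSA keeps the current state, so those events contribute only to the self-loop $\tilde P(\x,\x)$ and not to any off-diagonal entry.

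The key step is then the detailed-balance check. Using $\pi(\x) = \frac1Z e^{\beta f(\x)}$ and the elementary identity $f(\x) + [f(\x') - f(\x)]^+ = f(\x') + [f(\x) - f(\x')]^+ = \max\{f(\x),f(\x')\}$, I would compute
\begin{equation*}
\pi(\x)\tilde P(\x,\x') = \frac{q_{i,N_i}}{Z}\, c(\x,\x')\, e^{\beta \max\{f(\x),f(\x')\}},
\end{equation*}
which is manifestly symmetric in $\x$ and $\x'$ once I invoke the symmetry of $c(\x,\x')$ and note that the same link $i$ governs both directions so the factor $q_{i,N_i}$ is identical. This yields $\pi(\x)\tilde P(\x,\x') = \pi(\x')\tilde P(\x',\x)$, so $\PI$ satisfies detailed balance and is therefore stationary. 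I would finish by remarking that the LSA chain remains irreducible and aperiodic (the full-collection event has strictly positive probability $q_{i,N_i}>0$ and the self-loops make it aperiodic), so $\PI$ is in fact the unique stationary distribution.

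The main obstacle is conceptual rather than computational: I must argue carefully that the external message-drop randomness can be absorbed into the off-diagonal transition probabilities as a simple multiplicative constant without disturbing symmetry. The subtle point is that the drop event is \emph{independent} of the proposed states and depends only on which link $i$ is selected, so the same $q_{i,N_i}$ multiplies both $\tilde P(\x,\x')$ and $\tilde P(\x',\x)$; if the drop probability instead depended on the configuration or differed across the two transition directions, detailed balance could fail. Verifying that this independence holds under the stated $i.i.d.$ assumption on $q_{i,S(t)}$, and that partial collections genuinely contribute nothing off-diagonal, is where I would direct the most care.
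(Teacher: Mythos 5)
Your proposal is correct and follows essentially the same route as the paper: both arguments hinge on the observation that every off-diagonal LSA transition probability is the corresponding BSA one multiplied by the direction-independent constant $q_{i,N_i}$, so the detailed-balance equations for $\PI$ are unchanged and uniqueness follows from irreducibility. The only cosmetic difference is that you verify reversibility explicitly via the identity $f(\x)+[f(\x')-f(\x)]^+=\max\{f(\x),f(\x')\}$, while the paper simply cancels the common factor and appeals to the already-established reversibility of the BSA chain (note also that your $f(\x')-f(\x)$ coincides with the paper's local differential $\Delta(\x,\x')$ under its assumption that only links in $N_{\hat i}\cup\{\hat i\}$ are affected by the change).
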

%\begin{proof}
%The transition probability of LSA algorithm, $P^{L}(\x,\x')$ can be written as
%\begin{eqnarray*}
%P^{L}(\x,\x') = \frac{1}{n} \prod_{i \in \N} q_{i,[N_i]} P_i(\x,\x')
%\end{eqnarray*}
%for $\x\neq\x'$, and $P^L(\x,\x) = 1 - \sum_{\x'\neq\x}P^L(\x,\x')$. Therefore, for any $\x \neq \x' \in \Omega$, it holds
%\begin{align*}
%&&\pi^{L}(\x) P^{L}(\x,\x') &= \pi^{L}(\x') P^{L}(\x',\x)& \\
%&\Leftrightarrow& \tilde{\pi}(\x) \frac{1}{n} \prod_{i \in \N} \prod_{S \subset N_i} q_{i,S} P_i(\x,\x') &= \tilde{\pi}(\x') \frac{1}{n} \prod_{i \in \N} \prod_{S \subset N_i} q_{i,S} P_i(\x',\x)& \\
%&\Leftrightarrow& \pi^{L}(\x) P(\x,\x') &= \pi^{L}(\x') P(\x',\x).&
%\end{align*}
%Since $\pi$ is the unique solution to the set of above equations along with the probability constraint, $\sum_{\x} \pi^L(\x) = 1$, the result follows.
%%$\prod_{j \in N_i} p_{ji} $
%\end{proof}

As one can notice, the problem of this algorithm is its slow computational speed. Suppose that a node has multiple neighbors and the messages each from different neighbors drop independently with some non-zero probability. Then, the probability that it obtains all the information so that it can perform the algorithm decreases exponentially fast with the number of neighboring nodes. 
Next, we present a new approach that can greatly improve the algorithm operation speed in the presence of message drops.

\section{Improving the computational speed} \label{sec:rsa}

%\subsection{A new algorithm}
\subsection{Proposed solution: rapid SA (RSA) algorithm}

%We point out that there is a room for further improvement on the naive approach. 
The high level description of the main idea we introduce here is as follows.
In many network application scenarios, a change of a single nodal configuration often results in a limited amount of impact to the dependent performance objectives. 
With the knowledge of the bounded impact, we construct a confidence range on the objective differential that can be made due to the change of configuration, and utilize it to compute the desired level of probabilistic uncertainty in the stochastic acceptance criterion of SA.
%generate the desired level of uncertainty for the true acceptance probability.
As a result, a certain level of impreciseness on the evaluation of objective differentials can be tolerated without affecting its optimality.

To give a motivating example, consider the following simple network scenario with a set of four nodes, $\N = \{a,b,c,d\}$, where each node represents a distinct pair of communication link.
Each node $i \in \N$ can be either active, $x_i = 1$, or inactive, $x_i = 0$, and two nodes connected in the graph (presented in Fig \ref{fig:example_tp1}) \emph{conflict} with each other such that a node obtains a unit capacity only if it is active and all its neighbor nodes (the set of nodes connected to it in the graph) are inactive, and obtains zero capacity if otherwise. 
The objective is to maximize $f(\x) := \sum_{i \in \N} w_i c_i(\x)$ where the weights $w_i$'s are chosen as $w_a = 5, w_b = 7, w_c = 10, w_d = 3$.
With this setup, suppose that the configuration at current time $t$ is $\x(t) = \{1,1,0,0\}$, i.e., only $a$ and $b$ are active, and consider to switch the state of node $c$ from inactive to active, i.e., $\x(t+1) = \{1,1,1,0\}$ according to the SA framework. In this case, the local objective differential measured (during the train phase) at each node is $\Delta_a = -5$, $\Delta_b = -7$, $\Delta_c = 0,$ and $\Delta_d = 0$, respectively, and the values $\Delta_a$, $\Delta_b$, $\Delta_d$ are to be transmitted in the messaging phase towards node $c$. Suppose further in this particular time slot, $\Delta_a$ and $\Delta_b$ are delivered successfully whereas $\Delta_d$ has not reached node $c$ due to a temporally bad condition experienced over the communication channel. 
Since node $c$ did not receive $\Delta_d$, it cannot correctly compute the aggregate objective differential which is needed to compute the transition probability. 
On the other hand, with the knowledge of $\Delta_a$ and $\Delta_b$, node $c$ can determine a bounded range on the consequential aggregate objective differential such that $\Delta_a + \Delta_b + \Delta_c + \Delta_d = \Delta \in [-15, -12]$, since $\Delta_d \in \{-3, 0\}$ can be easily inferred by node $c$: $\Delta_d = -3$ if node $d$ was active, and $\Delta_d = 0$ if it was inactive. Our main idea we propose here is to suggest to make a transition based on the lower bounded transition probability ($e^{- 15 \beta }$ in this case, rather than $e^{-12 \beta }$, that with the true objective differential) that can be computed based on any subset information. 

This new idea relies on the following assumption:
each node $i$ has a known lower bound (upper bound in minimization problem) on the differential contribution to the objective $f_j$ of any neighboring node $j$ that can be made due to solitary change of node $i$'s configuration from $x_i$ to $x'_i$ such that
\begin{equation*}
    \min_{\x_{[-i]}}f_j(x'_i, \x_{[-i]}) - \max_{\x_{[-i]}} f_j(x_i, \x_{[-i]}) \geq b^{ij}_{x_i,x'_i},
\end{equation*}
where $\x_{[-i]} = \x_{[\N \backslash \{i\}]}$, and it is allowed to have $b^{ij}_{x_i,x'_i}= -\infty$ in the case that there is no known bound for it.
%In words, $b^{ij}_{x_i,x'_i}$ indicates the upper bound on the largest possible performance gap on $f_j$ that can be made by the change of node $i$'s state from $x_i$ to $x'_i$.
For the max-weight problem under the SINR model, one can obtain a trivial bound: $b^{ij}_{x_i,x'_i}$ is $-w_j c_j^{\max}$ if $x'_i \geq x_i$ and is zero if otherwise, where $c^{\max}_j$ is the (a priori known) maximum achievable rate of link $j$ due to physical constraints of wireless technology in use, and $w_j$ can be easily informed as it only requires a one-time transmission. It is possible to obtain a tighter bound if additional information on the objective function, such as the gain term $g_{ij}$ between node $i$ and $j$, is available. The efficiency of this approach essentially relies on the tightness of the bounds, however, we observe through extensive simulations that loose bounds are often sufficient to offer substantial improvement on the algorithm operation speed when the packet drop rate is high.
A formal description of this idea is presented in Algorithm~\ref{alg:new}.
%$-w_j c_j(x'_i, \x_{-i}) + w_j c_j(x_i, \x_{-i}) \leq w_j c^{\max}_j$.
%The description of a generalized version of this idea is presented in Algorithm \ref{alg:new}.

%\begin{figure}[t]
%    \centering
%    \includegraphics{Picture1_resized}
%    \includegraphics[trim={-2cm 0.5cm 1cm 25cm}]{Picture1_resized}
%    \caption{An example topology. In this case, the messages $\Delta_a$ and $\Delta_b$ are delivered successfully to node $c$, whereas the massage $\Delta_d$ is lost due to temporal noise, and etc.} \label{fig:example_tp1}
%\end{figure}

\begin{figure}[t]
    \centering
    \includegraphics[scale=0.45]{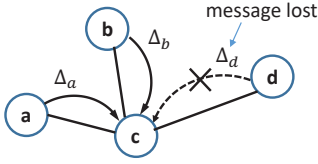}
    \caption{An example topology in which the messages $\Delta_a$ and $\Delta_b$ are delivered successfully to node $c$, whereas the massage $\Delta_d$ is lost.} \label{fig:example_tp1}
\end{figure}

\begin{algorithm} 
    \caption{Rapid SA (RSA) algorithm (at node $i$ in time $t$)} \label{alg:new}
    \begin{algorithmic}[1]
    \MSGINPUT $S(t) \subseteq N_i$ \text{and} $\{\Delta_j\}_{j \in S(t)}$.
    \DECISION 
    \State{Set \resizebox{.42 \textwidth}{!} {$\Delta_{[S(t)]} = \Delta_i \!+\! \sum_{j \in S(t)}\! \Delta_j \!+\! \sum_{j \in \N_i \backslash S(t)} b^{ij}_{x_i(t-1),x_i(t)} $}.}
    \If{ $\Delta_{[S(t)]} \leq 0 $} $x_i(t) = x_i(t-1)$ w.p. $1-e^{\beta \Delta_{[S(t)]}}$
    \EndIf
    \end{algorithmic}
\end{algorithm}

In RSA algorithm, nodes are allowed to perform the algorithm based on the bounded estimate on the potential objective differential, which can be computed based on the subset information currently observed.
Compared to the LSA algorithm, we add additional transitions on the system dynamics, so its faster computational speed is expected. In the previous case, for example, when the message $\Delta_d$ was lost, the network configuration had to remain on the same state in LSA algorithm, whereas now it has some degree of probability that can transit to a new state in RSA algorithm.
We obtain the following relation among the algorithms, of which proof is provided in Appendix.

%The transition probabilities of this new algorithm, denoted as $P^{R}(\x,\x')$, $\x \neq \x' \in \Omega$, can be written as
%\begin{eqnarray}
%P^{R}(\x,\x') = \frac{1}{n} \prod_{i \in \N} \sum_{S \subseteq N_i} q_{i,[S]} P_{i,\Delta_{[S]}}(\x,\x') \label{eq:trans_pr_rsa}
%\end{eqnarray}
%where
%\begin{equation*}
%    P_{i,\Delta_{[S]}}(\x,\x') =  c(x_i,x'_i) e^{-\beta [\Delta_{[S]}(\x,\x')]^+} \idc_{x_i \neq x'_i} \prod_{j \neq i} \idc_{x_j = x'_j}.
%\end{equation*}

%Then, the additional transitions added from LSA to RSA can be seen by noticing that $P^{R}(\x,\x') \equiv P^{L}(\x,\x')$ if $P_{i,\Delta_{[S]}}(\x,\x') = 0$ for all $S \neq N_i$.

\begin{proposition} \label{prop:ordering}
Let $\bP^{B}$, $\bP^{R}$, and $\bP^{L}$ denote the transition probability matrices of BSA, RSA, and LSA algorithms, respectively. Then, for all $\x, \x' \in \bX$ $(\x \neq \x')$, it holds
\begin{equation*}
    P^B(\x,\x') \geq P^R(\x,\x') \geq P^L(\x,\x').
\end{equation*}
\end{proposition}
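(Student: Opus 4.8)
The plan is to write all three transition probabilities in a single common form that differs only in one acceptance factor, and then compare those factors. First I would observe that the pick and train phases are identical across BSA, RSA, and LSA, so the three algorithms share the same \emph{proposal} probability $c(\x,\x')$ of moving from $\x$ to $\x'$. Because a single slot alters at most one link's level, $c(\x,\x')=0$ unless $\x$ and $\x'$ agree in every coordinate but one; in that degenerate case all three transition probabilities vanish and the claimed inequalities hold trivially. Hence I fix $\x,\x'$ differing in exactly one coordinate $i$, write $x_i$ for the old level $x_i(t-1)$ and $x'_i$ for the proposed level $x_i(t)$, and set $a(u):=\min\{1,e^{\beta u}\}$, the common (nondecreasing in $u$) acceptance function.

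Next I would condition on the random collected subset $S=S(t)\subseteq N_i$, which occurs with probability $q_{i,S}$. With $\Delta:=\Delta_i+\sum_{j\in N_i}\Delta_j$ the true aggregate differential and $\Delta_{[S]}$ the bounded estimate used by RSA, the three transition probabilities become
\begin{align*}
P^B(\x,\x') &= c(\x,\x')\,a(\Delta), \\
P^R(\x,\x') &= c(\x,\x') \sum_{S \subseteq N_i} q_{i,S}\,a(\Delta_{[S]}), \\
P^L(\x,\x') &= c(\x,\x')\,q_{i,N_i}\,a(\Delta),
\end{align*}
where for LSA I used that acceptance occurs only on the event $\{S=N_i\}$, on which no bounds are substituted and hence $\Delta_{[N_i]}=\Delta$. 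The idealized BSA carries full information, so its factor does not depend on $S$.

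The crux, and the one place the standing bound assumption is indispensable, is the pointwise comparison $\Delta_{[S]}\leq\Delta$ for \emph{every} realized $S$. Here I would expand
\begin{equation*}
\Delta - \Delta_{[S]} = \sum_{j \in N_i \backslash S} \big( \Delta_j - b^{ij}_{x_i,x'_i} \big),
\end{equation*}
and show each summand is nonnegative: since $\Delta_j = f_j(x'_i,\x_{[-i]}) - f_j(x_i,\x_{[-i]})$ for the actually realized neighborhood $\x_{[-i]}$, bounding the first term below by its minimum over $\x_{[-i]}$ and the second above by its maximum gives $\Delta_j \geq \min_{\x_{[-i]}} f_j(x'_i,\x_{[-i]}) - \max_{\x_{[-i]}} f_j(x_i,\x_{[-i]}) \geq b^{ij}_{x_i,x'_i}$. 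The care needed here, namely keeping the fixed realized configuration distinct from the worst-case min/max appearing in the assumption, is the main obstacle; everything after it is monotonicity and nonnegativity.

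Finally I would conclude both inequalities. Monotonicity of $a$ together with $\Delta_{[S]}\leq\Delta$ gives $a(\Delta_{[S]})\leq a(\Delta)$ for each $S$; averaging against the weights $q_{i,S}$, which sum to one, yields $P^R(\x,\x') \leq c(\x,\x')\,a(\Delta) = P^B(\x,\x')$. For the lower bound I would discard every term of the RSA sum except the $S=N_i$ term, legitimate since each term is nonnegative ($a\geq 0$, $q_{i,S}\geq 0$), and use $\Delta_{[N_i]}=\Delta$ to get $P^R(\x,\x') \geq c(\x,\x')\,q_{i,N_i}\,a(\Delta) = P^L(\x,\x')$. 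Combining the two gives $P^B(\x,\x') \geq P^R(\x,\x') \geq P^L(\x,\x')$, as claimed.
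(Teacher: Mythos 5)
Your proof is correct and follows essentially the same route as the paper's: both establish the pointwise inequality $\Delta_{[S]}(\x,\x')\leq \Delta(\x,\x')$ by bounding the realized differential of each uncollected neighbor below by the min-minus-max expression and hence by $b^{ij}_{x_i,x'_i}$, then use monotonicity of the acceptance factor together with $\sum_S q_{i,S}=1$ for the upper bound and retention of only the $S=N_i$ term for the lower bound. Your explicit handling of the degenerate case $c(\x,\x')=0$ is a minor completeness touch the paper leaves implicit.
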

%\begin{proof} See our technical report in \cite{treport18}. \end{proof}

%In this respect,

%\subsection{Optimality and efficiency}

\subsection{Optimality}

Note that RSA algorithm does not necessarily achieve the same stationary distribution $\PI$ we intended, and it is difficult to find a closed form solution for it.
Technically speaking, the algorithm experiences \emph{bias} on the desired stationary distribution due to the additional transitions we added onto the algorithm. 
For this reason, the conceptual argument that the probability distribution gets concentrated on the optimal states as $\beta$ grows cannot be used.
Our main concern here is therefore to see if the algorithm is still able to find optimal solutions under the standard SA framework.
To that end, we first formally define the notion of an algorithm being optimal.
\begin{definition}
An algorithm is called \emph{annealing-optimal} if a Markov chain, $X(t)$, governed by the algorithm with a proper cooling schedule for $\beta(t)$ achieves
\begin{align}
    \lim_{T \rightarrow \infty} \frac{1}{T} \sum_{t=1}^T \pr\{X(t) \in \bX^*\} = 1. \label{eq:anneal_opt}
\end{align} 
\end{definition}

In the conventional SA, the cooling schedule is typically constructed by $\beta(t) = \log(t)/d$ where $d$ is some positive constant that determines the order of cooling rate. 
Using this cooling schedule, by the proper cooling schedule we mean that an algorithm is said to be annealing optimal if Eq. (\ref{eq:anneal_opt}) can be verified for sufficiently large enough $d$.

To verify the optimality of RSA algorithm, we adopt a technical method introduced in \cite{connors1989simulated}, in which the optimality of the original simulated annealing algorithm is proven.
%We begin with introducing some necessary conditions and important results therein, and extend them to show the same optimality result for our case. 
The authors in \cite{connors1989simulated} have verified the annealing optimal of SA algorithms for a certain class of Markov chains whose transition probabilities can be written as
\begin{equation}
    p_{ij}(t) = c_{ij} \epsilon(t)^{V_{ij}}, \label{eq:trans_pr}
\end{equation}
where $V_{ij}, c_{ij} \geq 0,$ for all $i,j$, $\sum_{j \neq i} c_{ij} = 1$, for all $i$, $p_{ii}(t) = 1 - \sum_{j \neq i} p_{ij}(t)$, and $0 \leq \epsilon(t) \leq 1$, $t \geq 1$ is the parameter related to the cooling schedule. 
Note that the conventional simulated annealing algorithm can be represented by this form with setting $V_{ij} = [f(j) - f(i)]^+$ and $\epsilon(t) = e^{-\beta(t)}$ in which minimum $f(\cdot)$ is sought.
It is a straightforward job to verify that both BSA and LSA algorithms can be represented by the above form, from which their optimalities easily follow.

On the other hand, it turns out that the transition probabilities of the RSA algorithm does not conform to Eq. (\ref{eq:trans_pr}), and thus their analysis cannot be immediately applied to show its optimality. Nevertheless, we obtain the following result.

%Here we focus on the popularly used inverse logarithmic cooling schedule, i.e., $\epsilon(t) = t^{-d}$, where $d > 0$ is some constant.
%For this class of Markov chain, the authors have introduced the notion of \emph{recurrence orders} for the states and transitions of the Markov chain, as follows,

%We assume that the inverse logarithmic schedule is used, i.e., $\epsilon(t) = t^{-1/\rho}$, $t \geq 1$ where $\rho$ is the depth of the problem defined as follow: $\rho$ is the least number such that for any $i \in \Omega$, there exists a path $i_0 = i,i_1,\ldots,i_p = i^* \in \Omega^*$

\begin{theorem} \label{thm:optimal}
RSA algorithm is annealing optimal.
\end{theorem}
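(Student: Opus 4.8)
The plan is to show that although the RSA transition probabilities fail to match the exact polynomial form of Eq.~(\ref{eq:trans_pr}), they satisfy a \emph{perturbed} version of it with the \emph{same} exponents $V_{\x\x'}$ as BSA, and then to argue that the recurrence-order analysis of \cite{connors1989simulated} depends only on these exponents and on the prefactors being bounded away from $0$ and $\infty$, so that its optimality conclusion transfers. First I would write the one-step RSA kernel explicitly. Conditioning on the collected subset $S(t) \subseteq N_i$, which occurs with probability $q_{i,S}$, the accept probability for a proposed single-node move $\x \to \x'$ is $\min\{1, e^{\beta \Delta_{[S]}}\}$, so that
\begin{equation*}
P^R(\x,\x') = c(\x,\x') \sum_{S \subseteq N_i} q_{i,S}\, \min\{1, e^{\beta \Delta_{[S]}}\}.
\end{equation*}
This is a sum of distinct powers of $\epsilon(t) = e^{-\beta(t)}$, which is exactly why it lies outside Eq.~(\ref{eq:trans_pr}).

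The key step is to pin down the leading order of this kernel, and here I would invoke Proposition~\ref{prop:ordering} directly. Both bounding kernels have the exact form $P^B(\x,\x') = c(\x,\x')\,\epsilon(t)^{V_{\x\x'}}$ and $P^L(\x,\x') = c(\x,\x')\,q_{i,N_i}\,\epsilon(t)^{V_{\x\x'}}$ with the \emph{identical} exponent $V_{\x\x'} = [f(\x)-f(\x')]^+$, so dividing the sandwich $P^L(\x,\x') \le P^R(\x,\x') \le P^B(\x,\x')$ by $\epsilon(t)^{V_{\x\x'}}$ yields
\begin{equation*}
c(\x,\x')\,q_{i,N_i} \;\leq\; \frac{P^R(\x,\x')}{\epsilon(t)^{V_{\x\x'}}} \;\leq\; c(\x,\x').
\end{equation*}
Since $\bX$ is finite, these pointwise bounds give uniform positive constants $c_-, c_+$ with $P^R(\x,\x') = \tilde c_{\x\x'}(t)\,\epsilon(t)^{V_{\x\x'}}$ and $\tilde c_{\x\x'}(t) \in [c_-, c_+]$ for all $t$; in particular $\log P^R(\x,\x') / \log \epsilon(t) \to V_{\x\x'}$ by squeezing, with $q_{i,N_i} > 0$ being precisely what keeps the lower bound positive. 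The same fact is visible directly in the sum above: the full-set term $S = N_i$ reproduces the true differential $\Delta$, which is the largest among all $\Delta_{[S]}$ because $b^{ij}_{x_i,x'_i} \le \Delta_j$, so it dominates as $\epsilon(t)\to 0$ and carries weight $q_{i,N_i}$. Consequently RSA, BSA, and LSA share one and the same exponent field $\{V_{\x\x'}\}$ and the same proposal support, hence the same virtual-energy landscape and the same minimizing set $\bX^*$.

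Finally I would lift the optimality result of \cite{connors1989simulated} to this perturbed setting. Their argument extracts the Cesàro-limit concentration from the \emph{recurrence orders} of the states, which are assembled only from sums (along escape paths) and minima (over competing paths) of the exponents $V_{\x\x'}$; a bounded prefactor $\tilde c_{\x\x'}(t) \in [c_-,c_+]$ contributes only an $O(1)$ multiplicative factor and leaves every such order unchanged. Concretely I would revisit their balance-of-recurrence-order lemmas, replace each constant $c_{ij}$ by the bounded $\tilde c_{\x\x'}(t)$, and check that every $\liminf$/$\limsup$ of an order is preserved. Once the generalized statement is in hand, annealing optimality of RSA follows for $\beta(t)=\log t/d$ with $d$ at least the critical depth of the common landscape, i.e.\ the same $d$ that already works for BSA. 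I expect the main obstacle to be exactly this generalization: verifying that none of the inductive order computations in \cite{connors1989simulated} secretly relies on the exact normalization $\sum_{j\neq i} c_{ij}=1$ or on constancy of the prefactors rather than merely on their boundedness. Should some step resist, the fallback is to bound the orders of all relevant path products and cut-weights between those of the exact-form chains $P^B$ and $P^L$ (whose orders coincide), again applying Proposition~\ref{prop:ordering} term by term.
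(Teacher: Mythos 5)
Your proposal is correct in substance and rests on the same two pillars as the paper's proof: the recurrence-order framework of \cite{connors1989simulated}, and the observation that because $q_{i,N_i}>0$ and $\Delta_{[S]}(\x,\x')\le\Delta(\x,\x')$ for every $S\subseteq N_{\hat i}$ (the inequality behind Proposition \ref{prop:ordering}), the full-set term dominates the RSA kernel as $\epsilon(t)\to 0$, so RSA shares BSA's exponent field and hence its energy landscape. Where you differ is in how the reduction is closed. You propose to rewrite $P^R(\x,\x')=\tilde c_{\x\x'}(t)\,\epsilon(t)^{V_{\x\x'}}$ with a time-varying prefactor sandwiched in $[c(\x,\x')q_{\hat i,N_{\hat i}},\,c(\x,\x')]$ and then to re-audit the entire inductive order computation of \cite{connors1989simulated} to check that it tolerates bounded non-constant prefactors --- a step you rightly flag as the main obstacle and leave open. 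The paper instead enlarges the admissible class of chains to finite mixtures $p_{ij}(t)=c_{ij}\sum_{m}a^m_{ij}\epsilon(t)^{V^m_{ij}}$, proves one new lemma that for such chains $\alpha_{ij}=\alpha_i-\max_{m}V^m_{ij}$ (a short divergence argument: a finite sum of series diverges iff one of its summand series does), identifies $\max_{S}V^{S}(\x,\x')=V^{N_{\hat i}}(\x,\x')$, and then --- crucially --- avoids any re-audit by noting that the RSA recurrence orders consequently satisfy exactly the same balance equations as BSA's, whose solution subject to $\max_{\x}\lambda(\x)=\rho$ is unique by Lemma \ref{lem:org_sim}-3); hence $\hat\alpha(\x)=\alpha(\x)$ for all $\x$ and annealing optimality follows from Lemma \ref{lem:org_sim}-4) and 5). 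The uniqueness of the balance-equation solution is the lever that discharges your ``main obstacle'' without touching the internals of \cite{connors1989simulated}; if you want to complete your version, use that (as in your fallback of matching cut-weights against $P^B$ and $P^L$, whose orders do coincide) rather than attempting the prefactor-boundedness audit of every intermediate step.
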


The major part of the analysis is to generalize the transition probability form of Eq. (\ref{eq:trans_pr}) in order to represent multiple conditional transition probabilities of RSA algorithm for different message acquisition events, and to verify a suitably defined notion of \emph{recurrence order} of each state, which conceptually captures how likely the system tends to stay on the state in a certain asymptotic sense, remains the same as that of BSA algorithm albeit the generalization. 
For brevity of the presentation, we provide the detailed steps for the proof in Appendix.
%Due to limited space, we omit the detailed steps for the proof, however they can be found in our technical report \cite{treport18}.

\subsection{Efficiency in asymptotic variance rate}

We now provide an insight into understanding the benefit of the proposed approaches by comparing different algorithms: BSA, LSA, and RSA algorithms. 
To quantitatively analyze and compare these algorithms, we first need to choose a specific metric that characterizes one algorithm being a good one.

One popular metric often considered in the literature is the mixing time. 
Conceptually, the mixing time of a Markov chain is the time until the Markov chain is close to its stationary state.
In the standard Markov chain theory \cite{Levin09}, the mixing time is precisely defined as
\begin{equation*} 
\resizebox{.5 \textwidth}{!} {
    $t_{mix}(\epsilon') = \min\{t \geq 1 : \max_{i \in \Omega} \| P_t(i,A) - \pi(A) \|_{\text{TV}} \leq \epsilon', \forall A \subseteq \Omega \},$
}
\end{equation*} 
which is the formalization of the idea: how large must $t$ be until the time-$t$ distribution is $\epsilon'$-close to $\PI$.
Unfortunately, directly dealing with this quantity is a very difficult task, and most of existing analytic techniques rely on the spectral analysis based on the relation $t_{mix}(\epsilon') \leq \log(1/(\epsilon' \pi_{\min})) / (1-\text{SLEM}(\bP))$, where $\text{SLEM}(\bP)= \max\{\eta_2, |\eta_{|\Omega|}|\}$ is the second largest eigenvalue modulus and $1 = \eta_1 \geq \ldots \geq \eta_{|\Omega|} \geq -1$ are the left eigenvalues of $\bP$.
Although the common wisdom in the literature is that the smaller SLEM is the smaller mixing time the chain $\bP$ will have, its ordering relation on the upper bounds does not necessarily imply the chain with a smaller SLEM will actually mix faster in a rigorous sense.

Instead, we look at another performance metric that has been extensively used in the sampling theory.
Sampling schemes are often used to estimate $\Ex_{\PI}(h) \triangleq \sum_{i \in \Omega} h(i) \pi(i)$ for various functionals $h:\Omega \rightarrow \R$ by generating $t$ samples $\{X(s)\}_{s=1}^t$ and constructing an estimator $\hat{\mu}_t(h) = \frac{1}{t} \sum_{s = 1}^{t} h(X(s))$.
In assessing the accuracy of this estimator, the asymptotic variance rate has been used as an important criterion in the literature.
The asymptotic variance rate $\sigma(\bP,h)$ of the estimate $\hat{\mu}_t(h)$ is defined in \cite{Levin09} as
\begin{align}
    \sigma(\bP,h) & = \lim_{t \rightarrow \infty} t \cdot \text{Var}(\hat{\mu}_t(h)). \label{eq:avr}  
%   & = \lim_{t \rightarrow} \frac{1}{t} \Ex \left\{ \sum_{s=1}^t (h(X_s) - \Ex_{\PI}(h))^2  \right\}. 
\end{align}

It has been known that the quantity $\sqrt{t} ( \hat{\mu}_t(h) - \Ex_{\PI}(h) )$ converges in distribution to a Gaussian random variable with zero mean and variance $\sigma(\bP,h)$. 

%This quantity naturally captures the, and therfore consider this as an indicator of the efficiency of algorithms in consideration.

We consider the ordering relationship among the three algorithms in term of the asymptotic variance rate. A useful technique related to this task is the so-called \emph{Peskun ordering}, which is described next.

\begin{definition}[Peskun ordering] \cite{peskun1973optimum}
 For two finite irreducible Markov chains on a finite state space $\Omega$ with $\bP=\{P_{ij}\}_{i,j \in \Omega}$ and $\bP'=\{P'_{ij}\}_{i,j \in \Omega}$ with the same stationary distribution $\PI$, it is said that $\bP'$ dominates $\bP$ off the diagonal, written as $\bP \preceq \bP'$ if $P_{ij} \leq P'_{ij}$ for all $i,j \in \Omega$ ($i \neq j$).
\end{definition}

\begin{lemma} \cite{peskun1973optimum}
If $\bP$ and $\bP'$ are reversible with respect to $\PI$, and $\bP \preceq \bP'$, then $\sigma(\bP, h) \geq \sigma(\bP',h)$ for any $h$ with $\text{Var}_{\PI}(h) \triangleq \sum_{i \in \Omega} (h(i) \pi(i) - \Ex_{\PI}(h))^2 < \infty$.
\end{lemma}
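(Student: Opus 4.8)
The plan is to use the operator-theoretic characterization of the asymptotic variance rate. Throughout, equip the space of functions $\Omega\to\R$ with the inner product $\langle f,g\rangle_{\PI} = \sum_{i\in\Omega} f(i)g(i)\pi_i$, so that reversibility of $\bP$ with respect to $\PI$ is exactly the statement that $\bP$ is self-adjoint on $L^2(\PI)$. Since adding a constant to $h$ alters neither $\hat\mu_t(h)-\Ex_{\PI}(h)$ nor $\text{Var}_{\PI}(h)$, I may assume without loss of generality that $h$ is centered, $\Ex_{\PI}(h)=0$, i.e. $h$ is orthogonal to the constants $\mathbf{1}$ in $L^2(\PI)$.

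First I would establish the resolvent representation
\[
\sigma(\bP,h) = 2\langle h,(I-\bP)^{-1}h\rangle_{\PI} - \langle h,h\rangle_{\PI},
\]
where $I-\bP$ is inverted on the invariant subspace orthogonal to $\mathbf{1}$. Starting the chain from stationarity, the autocovariances are $\gamma(k)=\langle h,\bP^{|k|}h\rangle_{\PI}$, and a direct expansion gives $t\,\text{Var}(\hat\mu_t(h)) = \gamma(0)+2\sum_{k=1}^{t-1}(1-k/t)\gamma(k)$. Passing to the spectral measure $\nu$ of $h$ induced by the self-adjoint operator $\bP$ (supported on $[-1,1)$ because $h\perp\mathbf{1}$ and the chain is irreducible), the integrand $1+2\sum_{k=1}^{t-1}(1-k/t)\eta^k$ is a Fej\'er-type kernel, bounded on $[-1,1]$ and converging pointwise to $(1+\eta)/(1-\eta)$. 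Bounded convergence then yields the displayed formula; in particular $(I-\bP)^{-1}$ is well defined and positive on $\mathbf{1}^{\perp}$, since there the eigenvalues of $I-\bP$ lie in $(0,2]$.

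Next I would translate the hypothesis $\bP\preceq\bP'$ into a Loewner comparison of the two Dirichlet forms. Using the identity $\langle f,(I-\bP)f\rangle_{\PI} = \tfrac12\sum_{i,j}\pi_i P_{ij}(f(i)-f(j))^2$, valid for any reversible stochastic $\bP$, and noting that the diagonal terms $i=j$ contribute zero, I obtain
\[
\langle f,(I-\bP')f\rangle_{\PI} - \langle f,(I-\bP)f\rangle_{\PI} = \tfrac12\sum_{i\neq j}\pi_i\bigl(P'_{ij}-P_{ij}\bigr)\bigl(f(i)-f(j)\bigr)^2 \ge 0,
\]
because $P'_{ij}\ge P_{ij}$ off the diagonal. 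Hence, as self-adjoint operators on $\mathbf{1}^{\perp}$, $0\prec I-\bP\preceq I-\bP'$. Finally I would invoke operator antitonicity of inversion: for positive definite self-adjoint operators, $0\prec A\preceq B$ implies $A^{-1}\succeq B^{-1}$. Applying this with $A=I-\bP$ and $B=I-\bP'$ gives $\langle h,(I-\bP)^{-1}h\rangle_{\PI}\ge\langle h,(I-\bP')^{-1}h\rangle_{\PI}$, and substituting into the resolvent representation yields $\sigma(\bP,h)\ge\sigma(\bP',h)$.

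I expect the main obstacle to be the rigorous justification of the resolvent representation, and specifically the interchange of the $t\to\infty$ limit with spectral integration together with the care needed at the possible eigenvalue $-1$ (periodicity), where the Fej\'er kernel vanishes in the limit so that the resolvent $(I-\bP)^{-1}$ remains finite and positive on $\mathbf{1}^{\perp}$. Once that formula is in hand, the Dirichlet-form comparison and the antitonicity of operator inversion make the variance ordering short and clean.
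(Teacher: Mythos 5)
Your argument is correct, but be aware that the paper itself gives no proof of this lemma: it is imported verbatim from Peskun (1973) as a cited result, so there is nothing in the appendix to compare against. What you have written is essentially the modern operator-theoretic proof (in the style of Tierney and of Caracciolo--Pelissetto--Sokal): the resolvent identity $\sigma(\bP,h)=2\langle h,(I-\bP)^{-1}h\rangle_{\PI}-\langle h,h\rangle_{\PI}$ on $\mathbf{1}^{\perp}$, the Dirichlet-form identity converting the off-diagonal ordering into the Loewner ordering $0\prec I-\bP\preceq I-\bP'$, and antitonicity of operator inversion. Peskun's original route is different in flavor --- he interpolates $\bP(\theta)=\bP+\theta(\bP'-\bP)$ and differentiates the asymptotic variance via the fundamental matrix to show it is nonincreasing in $\theta$ --- and your version is both cleaner and more readily generalizable. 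Two points deserve tightening, neither fatal. First, the kernel $K_t(\eta)=1+2\sum_{k=1}^{t-1}(1-k/t)\eta^k$ is \emph{not} uniformly bounded on all of $[-1,1]$ (it grows like $t$ as $\eta\uparrow 1$); the bounded-convergence step works because for a finite irreducible chain the spectral measure of a centered $h$ is supported on $[-1,\eta_2]$ with $\eta_2<1$, where $K_t$ is uniformly bounded (geometric bound for $\eta\in[0,\eta_2]$, alternating-series bound for $\eta<0$), and the possible atom at $\eta=-1$ is harmless since $K_t(-1)=t^{-1}\bigl(\sum_{j=0}^{t-1}(-1)^j\bigr)^2\to 0$, matching $(1+\eta)/(1-\eta)$ there. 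Second, you should state that the variance in the definition of $\sigma$ is taken under the stationary chain (or remark that on a finite irreducible state space the limit is insensitive to the initial law), since the autocovariance expansion $t\,\mathrm{Var}(\hat\mu_t(h))=\gamma(0)+2\sum_{k=1}^{t-1}(1-k/t)\gamma(k)$ presupposes stationarity.
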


Note from Proposition \ref{lem:LSA_equiv} that the stationary distributions of LSA and BSA algorithms are identical as $\PI$ in Eq. (\ref{eq:stationary}).
Also, the relation $\bP^{B}(\x,\x') \geq \bP^{L}(\x,\x')$ for $\x,\x' \in \bX$ $(\x \neq \x')$ in Proposition \ref{prop:ordering} is exactly the definition of Peskun ordering. Therefore, we obtain the following consequence.

\begin{proposition} $\sigma(\bP^L, h) \geq \sigma(\bP^B, h)$, for any $h$ with $\text{Var}_{\PI}(h) < \infty$.
\end{proposition}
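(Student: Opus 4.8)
The plan is to apply the Peskun ordering lemma directly, taking $\bP = \bP^L$ and $\bP' = \bP^B$. The lemma's conclusion $\sigma(\bP,h) \geq \sigma(\bP',h)$ is precisely the claimed inequality $\sigma(\bP^L,h) \geq \sigma(\bP^B,h)$, so the whole task reduces to checking the three hypotheses of the lemma: that $\bP^L$ and $\bP^B$ share the stationary distribution $\PI$, that both are reversible with respect to $\PI$, and that $\bP^L \preceq \bP^B$.

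First I would dispose of the two easy hypotheses. The common stationary distribution is handed to us: Proposition \ref{lem:LSA_equiv} gives that $\PI$ in Eq. (\ref{eq:stationary}) is stationary for LSA, and the Metropolis--Hastings construction of BSA makes $\PI$ stationary for BSA by design. The off-diagonal domination $\bP^L \preceq \bP^B$ is exactly the inequality $P^B(\x,\x') \geq P^L(\x,\x')$ established in Proposition \ref{prop:ordering}, which matches the definition of Peskun ordering verbatim. I would also note in passing that both chains are irreducible: BSA is an irreducible MH chain, and since the full-collection probability satisfies $q_{i,N_i} > 0$ for every $i$, every off-diagonal BSA transition remains strictly positive under LSA, so LSA inherits irreducibility.

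The one hypothesis requiring genuine work is the reversibility of $\bP^L$ with respect to $\PI$, since the lemma needs detailed balance rather than mere stationarity, and Proposition \ref{lem:LSA_equiv} only asserts the latter. To supply it, I would exploit the structure $P^L(\x,\x') = q_{i,N_i}\, P^B(\x,\x')$ for any two configurations $\x \neq \x'$ differing only in the coordinate $i$ of the selected link (for configurations differing in more than one coordinate both transition probabilities vanish, since BSA toggles a single coordinate per step). The key observation is that $q_{i,N_i}$ depends only on the identity of the toggled link $i$ and its neighbor set $N_i$, and is therefore identical for the forward transition $\x \to \x'$ and the reverse $\x' \to \x$, both of which toggle the same coordinate $i$. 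Multiplying the detailed-balance identity $\pi(\x) P^B(\x,\x') = \pi(\x') P^B(\x',\x)$ of the MH chain by this common factor then yields $\pi(\x) P^L(\x,\x') = \pi(\x') P^L(\x',\x)$, which is detailed balance for $\bP^L$; equivalently, this is the same computation underlying the Appendix proof of Proposition \ref{lem:LSA_equiv}. With all three hypotheses verified, invoking the lemma closes the argument. The main subtlety to get right is thus the direction-independence of $q_{i,N_i}$, which is exactly what transfers the reversibility of BSA intact to LSA.
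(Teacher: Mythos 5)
Your proof is correct and follows essentially the same route as the paper: both arguments reduce the claim to Peskun's lemma, supplying the common stationary distribution from Proposition \ref{lem:LSA_equiv} and the off-diagonal domination $P^B(\x,\x') \geq P^L(\x,\x')$ from Proposition \ref{prop:ordering}. Your explicit check that reversibility (not just stationarity) of $\bP^L$ holds, via the direction-independence of the factor $q_{\hat{i},N_{\hat{i}}}$, is a welcome bit of extra care, but it is the same computation already underlying the paper's proof of Proposition \ref{lem:LSA_equiv}, so the approach is not materially different.
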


However, the stationary distribution of RSA algorithm, denoted by $\PI^{R}$, is not necessarily equivalent to $\PI$. For this reason, we cannot rely on the Peskun ordering relation between RSA and the others. Unfortunately, the efficiency analysis for comparing two Markov chains with different stationary distributions is notoriously difficult, and to the best of our knowledge there are no known technical tools applicable to our case. We leave the following statement as our conjecture.

\begin{conjecture} \label{conj:ordering} $\sigma(\bP^L,h) \geq \sigma(\bP^R,h) \geq  \sigma(\bP^B,h)$, for any $h$ with $\text{Var}_{\PI}(h)< \infty$ and $\text{Var}_{\PI^{R}}(h) < \infty$.
\end{conjecture}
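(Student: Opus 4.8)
The plan is to attack the variance comparison through the spectral/variational representation of the asymptotic variance rather than through $\sigma(\bP,h)$ directly. For a chain reversible with respect to a distribution $\PI$ one has the Kipnis--Varadhan identity $\sigma(\bP,h) = \langle \bar h, (I+\bP)(I-\bP)^{-1}\bar h\rangle_{\PI}$ on the orthogonal complement of the constants, where $\bar h = h - \Ex_{\PI}(h)$, and equivalently $\sigma$ is driven monotonically by the Dirichlet form $\mathcal{E}_{\bP}(g,g) = \tfrac12\sum_{\x,\x'}\pi(\x)P(\x,\x')(g(\x)-g(\x'))^2$: larger off-diagonal mass yields a larger Dirichlet form, faster relaxation, and hence smaller $\sigma$. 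This is exactly the mechanism behind the already-established inequality $\sigma(\bP^L,h)\geq\sigma(\bP^B,h)$ --- equal stationary distribution $\PI$ together with the off-diagonal domination of Proposition \ref{prop:ordering}. My goal is to insert $\bP^R$ into this ordering by proving a monotonicity of $\sigma$ that survives the change of stationary law from $\PI$ to $\PI^R$ and back.

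Before anything else I would settle the reversibility of $\bP^R$, since every tool above presumes it. Writing $\bP^R$ as the message-drop mixture $P^R(\x,\x') = \tfrac{1}{nP^{\max}}\sum_{S\subseteq N_i} q_{i,S}\,\min\{1, e^{\beta\Delta_{[S]}}\}$ (with $i$ the unique differing coordinate and $\Delta_{[S]} = \Delta + \sum_{j\in N_i\setminus S}(b^{ij}_{x_i,x_i'}-\Delta_j)\leq \Delta$) I would test detailed balance $\pi^R(\x)P^R(\x,\x') = \pi^R(\x')P^R(\x',\x)$. Because the bounds $b^{ij}_{x_i,x_i'}$ are generically direction-dependent --- the trivial max-weight bound charges $-w_j c_j^{\max}$ only when the power increases --- I expect detailed balance to fail and $\bP^R$ to be non-reversible. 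In that case I would fall back to the additive reversibilization $\tfrac12(\bP^R+(\bP^R)^*)$, for which a comparison $\sigma(\bP^R,h)\geq\sigma(\tfrac12(\bP^R+(\bP^R)^*),h)$ is available, and route the argument through it, accepting that the conclusion may then hold only up to this symmetrization unless it can be shown tight for the present structure.

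The substantive step is the comparison across unequal stationary distributions, for which no off-the-shelf Peskun ordering exists. The approach I would take is a homotopy that realizes all three chains at once: fix the common symmetric proposal $c(\x,\x')$ and let the effective biased differential used on an incomplete subset slide from the true value $\Delta$ (recovering BSA) through the RSA bound value down to $-\infty$ (recovering LSA, where incomplete subsets never accept). Along this one-parameter family $\bP^{(s)}$, $s\in[0,1]$, the off-diagonal entries are monotone non-increasing (matching Proposition \ref{prop:ordering}), while the stationary law $\PI^{(s)}$ departs from $\PI$ at $s=0$, passes through $\PI^R$, and returns to $\PI$ at $s=1$. Differentiating the variance identity along $s$ splits $\tfrac{d}{ds}\sigma(\bP^{(s)},h)$ into a favourable Dirichlet-form term (non-increasing off-diagonal mass raises $\sigma$) and an obstruction coming from $\tfrac{d}{ds}\PI^{(s)}$. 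I would try to prove $\tfrac{d}{ds}\sigma\geq 0$ by bounding the stationary-perturbation term against the Dirichlet-form term, exploiting that $\Delta_{[S]}\leq\Delta$ makes the perturbation one-signed.

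The hard part will be controlling the sign of the stationary-perturbation term generated by $\tfrac{d}{ds}\PI^{(s)}$: this requires relating the derivative of an implicitly defined invariant vector to the off-diagonal change of the generator, and there is no general inequality guaranteeing it cannot overwhelm the favourable Dirichlet-form contribution. I expect the argument to close only under additional structure --- for instance when $\PI^R$ is a genuine log-linear tilt $\pi^R(\x)\propto\pi(\x)e^{\beta r(\x)}$ with $r$ behaving monotonically along the homotopy, or when $\bP^R$ is in fact reversible --- and to remain open in full generality, which is consistent with stating the result as a conjecture rather than a theorem.
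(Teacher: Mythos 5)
There is no proof of this statement in the paper for you to match: it is stated and left as a \emph{conjecture}, supported only by the empirical observation that the distributional bias of $\bP^{R}$ is small in simulations, and the authors say explicitly that no technical tools are known to them for ordering asymptotic variances of chains with different stationary distributions. Judged on its own, your proposal does not close the argument either, and you concede as much: the decisive step --- showing that the stationary-perturbation term generated by $\tfrac{d}{ds}\PI^{(s)}$ cannot overwhelm the Dirichlet-form term --- is exactly the obstruction the authors identify, and you supply no bound for it. Correctly diagnosing why the statement is a conjecture is not the same as proving it; what you have is a plausible research program, with the essential inequality still open.

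Two further cautions about the route itself. First, the Kipnis--Varadhan identity and the Peskun/Tierney monotonicity of $\sigma$ in the off-diagonal entries presuppose \emph{both} reversibility and a common stationary distribution; your interpolating family $\bP^{(s)}$ loses both at once for intermediate $s$, so even the ``favourable'' Dirichlet-form contribution to $\tfrac{d}{ds}\sigma$ is not justified away from the endpoints. Your suspicion that $\bP^{R}$ fails detailed balance (because $b^{ij}_{x_i,x'_i}$ is direction-dependent) is well taken, but the fallback you propose asserts $\sigma(\bP^R,h)\geq\sigma(\tfrac12(\bP^R+(\bP^R)^*),h)$, whereas the known comparison results for non-reversible chains generally point the other way (non-reversible perturbations do not increase asymptotic variance); if the inequality is in fact reversed, the symmetrization could at best help with the upper half $\sigma(\bP^L,h)\geq\sigma(\bP^R,h)$ and says nothing about $\sigma(\bP^R,h)\geq\sigma(\bP^B,h)$. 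Second, since $\PI^{R}\neq\PI$, the estimator $\hat{\mu}_t(h)$ run under $\bP^{R}$ converges to $\Ex_{\PI^{R}}(h)$ rather than $\Ex_{\PI}(h)$, so the three quantities in the conjectured chain are asymptotic variances of estimators of different targets; any homotopy argument must track this drift of the limiting mean as well as of the invariant law. In short: your framing of the difficulty agrees with the paper's, but the statement remains unproven by both the paper and your attempt.
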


The rationale for the conjecture is that
we observed from various simulations that the distributional bias of RSA algorithm is often very small, and hence we expect from Proposition \ref{prop:ordering} that a similar ordering relation will hold.
This conjecture is empirically found to be true in diverse cases.

\section{Numerical Evaluation}

In this section, we present the numerical experiments for the proposed algorithms.
We first consider the network scenario of the four link case presented in Section \ref{sec:rsa},
where simulations are performed with using fixed but different temperature parameters in order to observe how different algorithms behave in a specific temperature regime. 
We assume that the weight parameters, $\{w_i\}$, are given and fixed as such described in the earlier section, and each node knows these parameters for all of its neighbor nodes. And, we chose $b^{ij}_{x_i,x'_i}$ is $0$ for $x_i = 1, x'_i = 0$, and is $-w_j$ for $x_i=0, x'_i=1$, for all $i$ and $j \in N_i$ for the bound parameters.
Fig \ref{fig:st_dist}-A and \ref{fig:st_dist}-B plot the stationary distributions obtained from different algorithms for $\beta = 0.1$ and $\beta = 1$, respectively. Messages generated from neighbor nodes are set to be lost independently with probability $0.5$ for LSA and RSA algorithms. 
Note that the results of BSA algorithm corresponds to those of LSA (or RSA) algorithm with no message loss events.
%Due to limited space, we only present top five states that have largest distribution.
As expected from the traditional analysis of SA and the form of Gibbs distribution, it can be seen from the figure that the distribution from BSA algorithm is scattered around different states for small $\beta$ ($\beta=0.1$), whereas it becomes concentrated on the optimal state, $(1,1,0,1)$, for large $\beta$ ($\beta=1$). 
Similarly, the results of both LSA and RSA algorithms also match well with them, which reveals that the same annealing optimality will hold for RSA algorithm as well.
%Theses results lead us to believe that the same annealing optimality will hold for RSA algorithm as well.
%Since we cannot perform simulations for indefinitely long time with the dynamic cooling schedule, we cannot rigorously show the results for validating the theorem 1. However, theses results lead us to believe that the same annealing optimality will hold for RSA algorithm as well.
%Similar tendencies can be observed for both LSA and RSA algorithms as well, which validate the optimality result we obtained in earlier sections.

\begin{figure}[t]  \vspace{-4mm}
  \subfloat[$\beta=0.1$]{\includegraphics[width=1.75in]{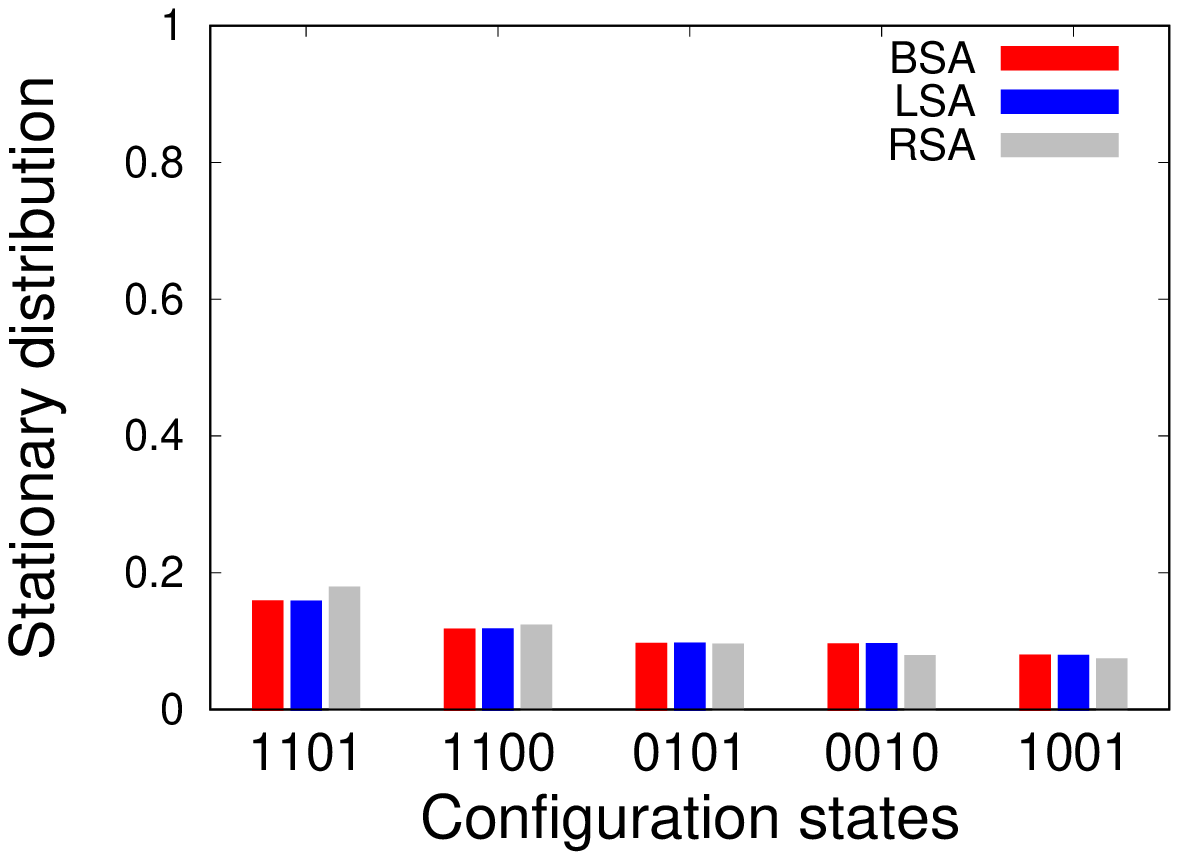} }
  \subfloat[$\beta=1$]{\includegraphics[width=1.75in]{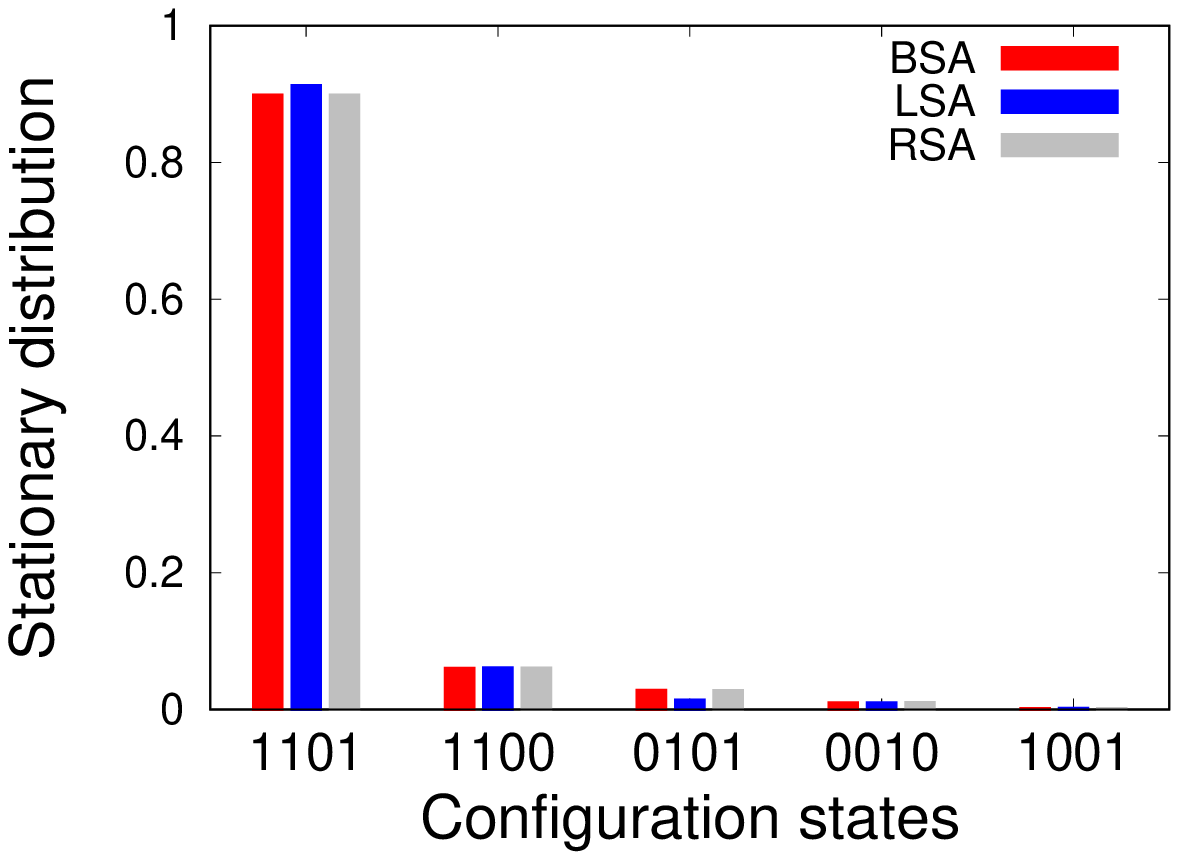} } 
\caption{Comparison of different algorithms in their stationary distributions with different $\beta$.} \label{fig:st_dist} \vspace{-2mm}
\end{figure}

% Fig2
In Fig \ref{fig:var}, we plot the variance rate defined in Eq. (\ref{eq:avr}) with the choice of $h(X(t)) = \idc_c(X(t)) \triangleq \idc\{X(t) = (0,0,1,0)\}$ in order to look at the variability of the cumulative service process of link $c$ over different time scales. 
In this case, $\beta=0.5$ is used.
The figure \ref{fig:var}-(a) shows the result with the message drop probability 0.1, in which the variability of link service process due to LSA algorithm increases quite a bit, whereas the result from RSA algorithm is almost same as that of BSA algorithm.
When the message drop rate is high (as in Fig \ref{fig:var}-(b)), the performance loss due to LSA algorithm is quite significant, whereas RSA algorithm can still be performed \emph{as if there is no message drop} in this case.
These results are consistent with our expectation described in Conjecture \ref{conj:ordering}.

\begin{figure}[t]  \vspace{-2mm}
  \subfloat[Message drop pr. is 0.1]{\includegraphics[width=1.75in]{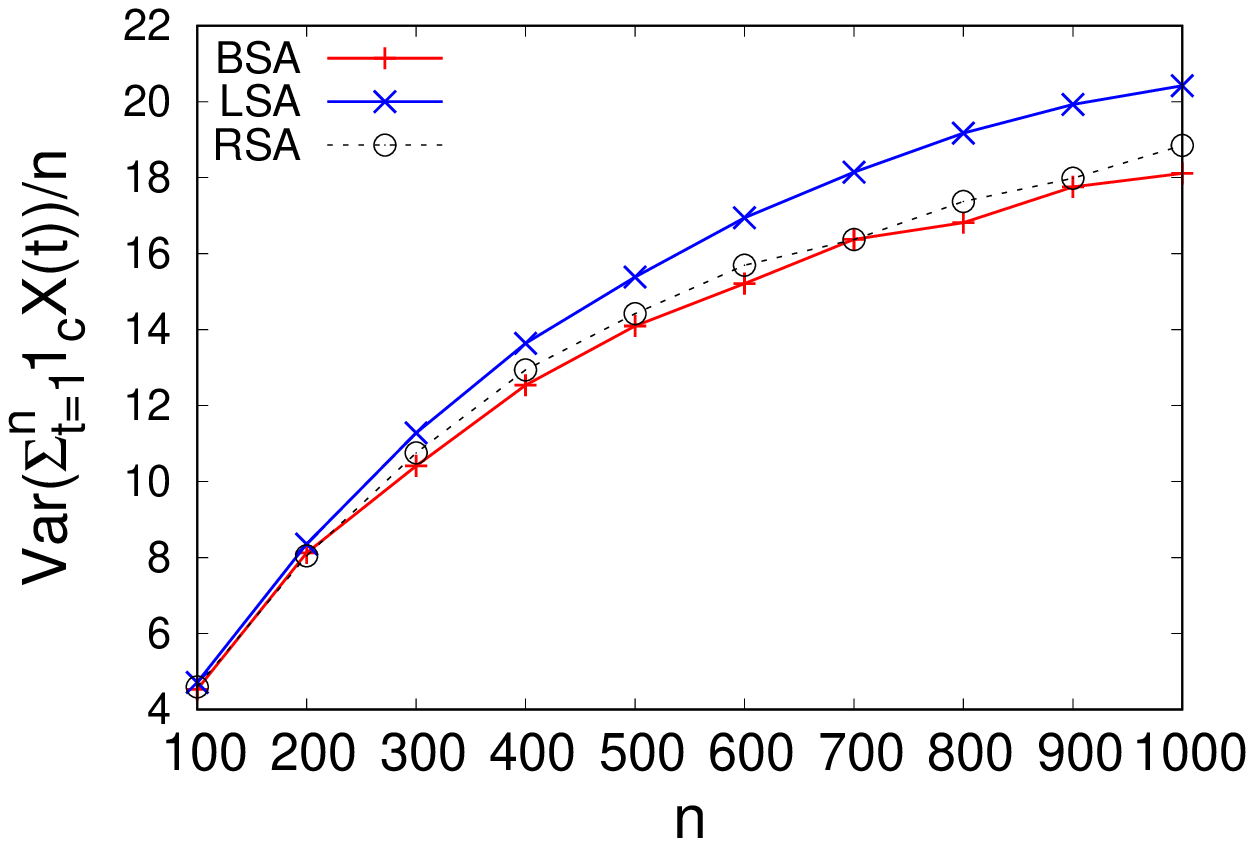} }
  \subfloat[Message drop pr. is 0.5]{\includegraphics[width=1.75in]{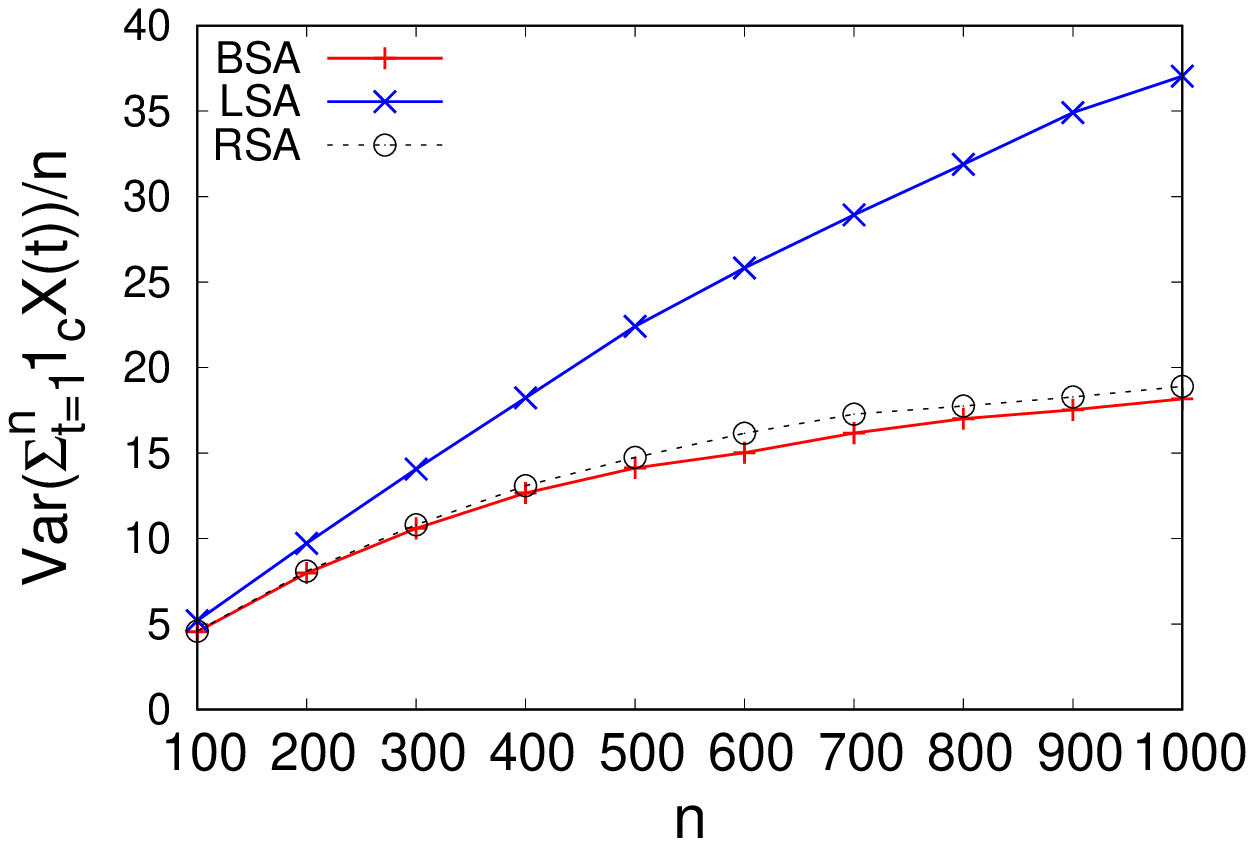} } 
\caption{The variance rate of the link $c$'s service process over different time scales ($\beta=0.5$).} \label{fig:var}  \vspace{-4mm}
\end{figure}

We here consider a queueing application scenario, where each node is associated with queue fed by an external packet arrival process.
Let $Q_i(t)$ be the queue size of node $i$ at time $t$ of which dynamics is determined by the typical queueing process: $Q_i(t) = [Q_i(t-1) + a_i(t) - c_i(\x(t))]^+$, $t \geq 1$, where the arrival process $a_i(t)$ is assumed to be a constant $a_i$ over time. 
In order to adapt to the dynamic queue size, we adopt the popular technique of \emph{dynamic fugacity} scheme used in the CSMA scheduling, in which the weight parameters are chosen such that $W_i(t) = \log(Q_i(t)+1)$\footnote{The log function is used to effectively emulate the time scale separation assumption in a large queue regime which is a standard technique \cite{SRS09, JRJ10CDC}.}. The parameter $\beta$ is simply set to be 1, since setting large weight parameters (or equivalently large queue size) will act as setting large $\beta$ in the max-weight problem, and the appropriate weights will be automatically found from the queue sizes. Now the weight parameters are dynamic over time, and therefore they should also be informed to the corresponding nodes as a form of message, which is also subject to the delivery failure events. 
Since the queue dynamics has a limited evolution over time: $Q_i(t)+a_ik-k \leq Q_i(t+k) \leq Q(t) + a_ik$, $k \geq 0$,
each node $i$ can obtain an upper bounded estimate for the weights $w_j(t+k)$ for $j \in N_i$, based on the most recently observed value of $w_j(t)$. 
We have implemented these schemes in the simulation, and plotted the average queue size of link $c$ in Fig \ref{fig:delay}. The results show that the improvement of average queue size by RSA algorithm can be quite significant especially when the message drop rate is high.
%This result is also consistent with the analysis for the relation between queueing performance and asymptotic variance rate shown in \cite{Clee12}, given that our conjecture is true.

\begin{figure}[t] \vspace{-4mm}
  \subfloat[Message drop pr. is 0.1]{\includegraphics[width=1.75in]{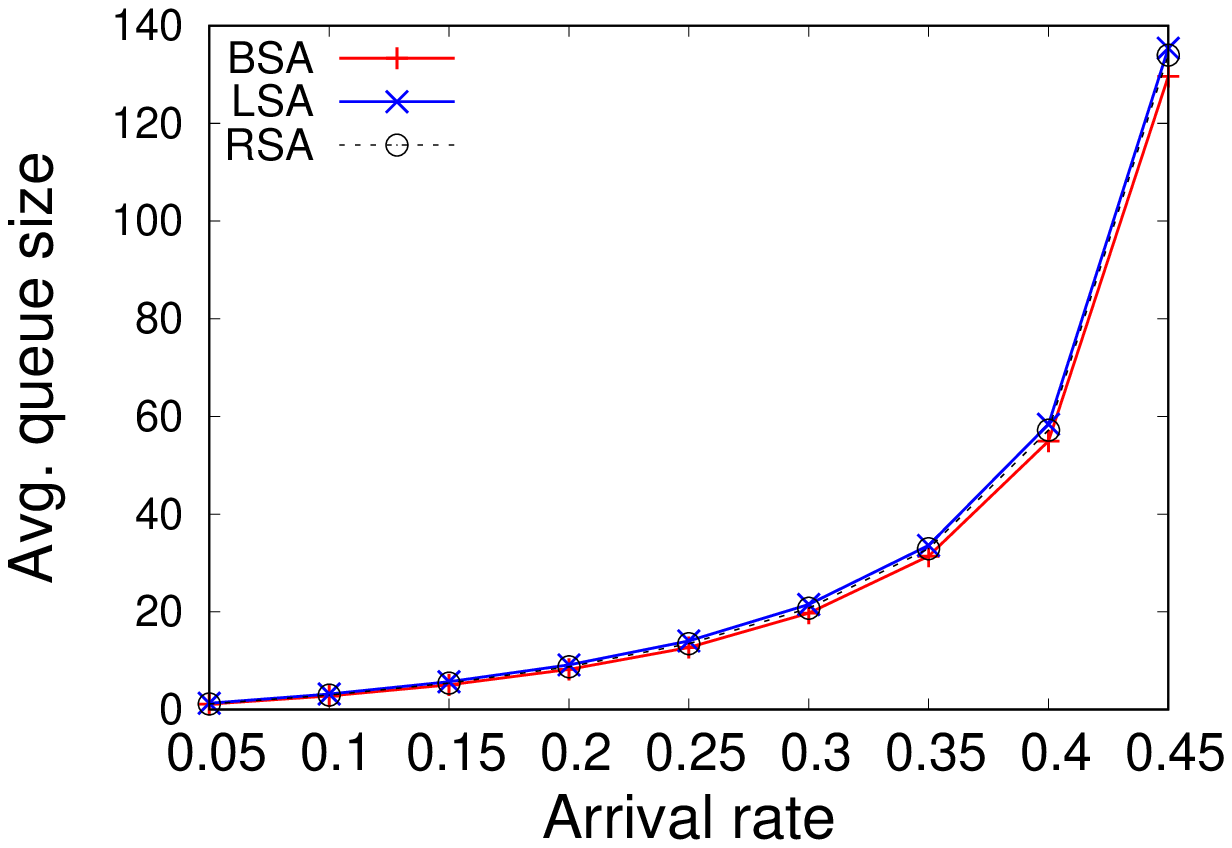} }
  \subfloat[Message drop pr. is 0.5]{\includegraphics[width=1.75in]{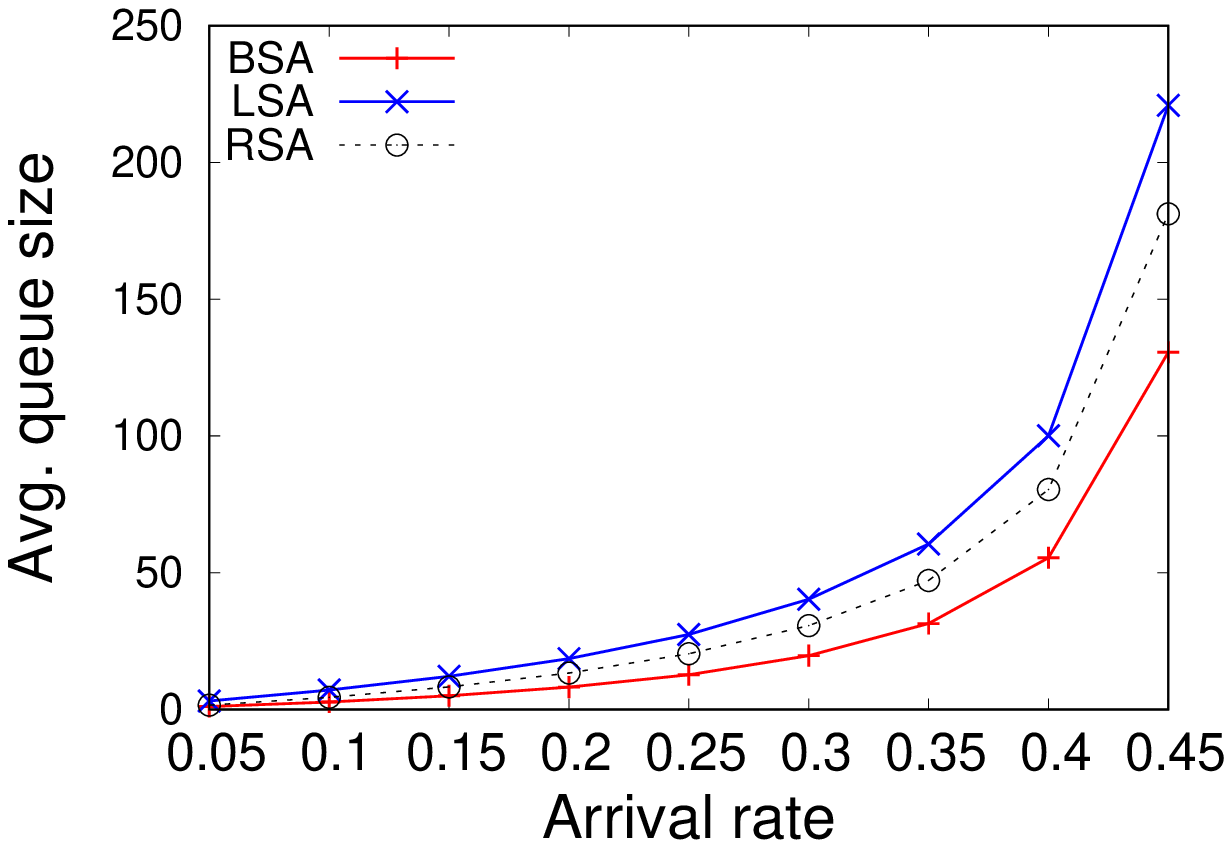} } 
\caption{Average queue size of link $c$} \label{fig:delay}  \vspace{-4mm}
\end{figure}

We also perform simulations with a realistic SINR model in a larger network with 10 pairs of communication links randomly deployed in a $500m\times 500m$ geographical region as shwon in Fig \ref{fig:random_topology}. The parameter settings are as following. The transmitter node of each link $i$ has three transmission power modes, $x_i \in \{0, 5, 10\}$mW, and the transmitted signal experiences pass loss by $d_{ij}^{-\gamma}$, where $\gamma=4$ and $d_{ij}$ is the distance between the transmitter $i$ and the receiver $j$ of the signal, and the thermal noise $n_0 = 10^{-12}$mW is used for all links. Each link obtains different amount of capacity depending on its experienced SINR level as described in Table \Romannum{1}, and the bound parameters are chosen with $c_i^{\max}=3, \forall i$ and in the way we described in Section \Romannum{5}-A. We have selected the set of neighbors from which the messages are to be collected during the messaging phase such that a link $j$ belongs to link $i$'s neighbor set $N_i$ if the receiver node of link $j$ is located within the range of 250m of the transmitter node of link $i$ (denoted as red-dotted lines in Fig \ref{fig:random_topology}). The packets are injected uniformly to every queue with varying rates from $0.05$ to which any queue gets saturated. We observe that the link with red-circled node tends to have the largest queue among all links, and hence we look at its queue size in the following.

The average queue size of the link with the message drop probabilities is plotted in Fig \ref{fig:delay_rand}. We observe that the queueing performance of LSA algorithm is very sensitive to the message drop events, and it can be shown in Fig \ref{fig:delay_rand}-(a) that its queue size quickly grows even with very small drop probability, where the performance of RSA algorithm is fairly close to the one without any message drop event. Fig \ref{fig:delay_rand}-(b) shows that as the drop probability increases, the performance of RSA algorithm also gets worse, however, its improvement is substantial in comparison to LSA algorithm.

We also look at the impact of using loose bounds, rather than using the known-tight bound. 
For this, we intentionally used large values of $c_i^{\max} \in [3,\ldots,10]$ for the bound parameters, and performed the simulations.
Fig \ref{fig:ratio} plots the ratio of the measured queue size between RSA and LSA algorithm with using different bound parameters under the different message drop regimes (from 0.05 to 0.4). 
It shows that in both cases of light and heavy packets arrival intensity, the improvement is remarkable even with using twice larger bound than the tight bound when e.g., drop rate is 0.2 (the ratio in this case is 0.85 and 0.78 for the arrival rate 0.2 and 0.4 respectively), and in general the improvement by RSA algorithm can be seen substantial unless the bound is too loose.

\begin{table}
  \begin{minipage}[b]{0.44\linewidth}
    \centering
    \begin{tikzpicture} [scale=0.006]
\tikzstyle{red_nodes} = [draw=red!75, thick, circle, fill=red!20, minimum size=3mm, scale=0.4]
\tikzstyle{blue_nodes} = [draw=blue!75, thick, circle, fill=blue!20, minimum size=3mm, scale=0.4]
%\node [blue_nodes] (v1) at (0,0) {};
%\node [red_nodes] (v2) at (90:3) {};
%\node [blue_nodes] (v3) at (25:3.6) {};
%\node [blue_nodes] (v4) at (315:3.6) {};
%\node [blue_nodes] (v5) at (-10: 6) {};
%\node [blue_nodes] (v6) at (210: 3) {};
%\node [blue_nodes] (v7) at (165: 4.8) {};
%\draw (v1) -- (v2)
%(v1) -- (v3)
%(v1) -- (v4)
%(v3) -- (v4)
%(v3) -- (v5)
%(v1) -- (v6)
%(v6) -- (v7);
\draw[step=100,gray,very thin] (0,0) grid (500,500);
\node [blue_nodes] (t1) at (110,410) {};
\node [blue_nodes] (r1) at (160,440) {};
\node [blue_nodes] (t2) at (100,290) {};
\node [blue_nodes] (r2) at (78,210) {};
\node [red_nodes] (t3) at (183,301) {};
\node [blue_nodes] (r3) at (258,248) {};
\node [blue_nodes] (t4) at (179,51) {};
\node [blue_nodes] (r4) at (95,71) {};
\node [blue_nodes] (t5) at (380,450) {};
\node [blue_nodes] (r5) at (322,422) {};
\node [blue_nodes] (t6) at (440,310) {};
\node [blue_nodes] (r6) at (400,390) {};
\node [blue_nodes] (t7) at (451,65) {};
\node [blue_nodes] (r7) at (412,120) {};
\node [blue_nodes] (t8) at (270,30) {};
\node [blue_nodes] (r8) at (350,50) {};
\node [blue_nodes] (t9) at (490,260) {};
\node [blue_nodes] (r9) at (413,220) {};
\draw [->] (t1) -- (r1);
\draw [->] (t2) -- (r2);
\draw [->] (t3) -- (r3);
\draw [->] (t4) -- (r4);
\draw [->] (t5) -- (r5);
\draw [->] (t6) -- (r6);
\draw [->] (t7) -- (r7);
\draw [->] (t8) -- (r8);
\draw [->] (t9) -- (r9);
\draw [dotted,red,-] (r2) -- (t1);
\draw [dotted,red,-] (r3) -- (t1);
\draw [dotted,red,-] (r5) -- (t1);
\draw [dotted,red,-] (r1) -- (t2);
\draw [dotted,red,-] (r3) -- (t2);
\draw [dotted,red,-] (r4) -- (t2);
\draw [dotted,red,-] (r1) -- (t3);
\draw [dotted,red,-] (r2) -- (t3);
\draw [dotted,red,-] (r4) -- (t3);
\draw [dotted,red,-] (r5) -- (t3);
\draw [dotted,red,-] (r6) -- (t3);
\draw [dotted,red,-] (r9) -- (t3);
\draw [dotted,red,-] (r2) -- (t4);
\draw [dotted,red,-] (r3) -- (t4);
\draw [dotted,red,-] (r7) -- (t4);
\draw [dotted,red,-] (r8) -- (t4);
\draw [dotted,red,-] (r1) -- (t5);
\draw [dotted,red,-] (r3) -- (t5);
\draw [dotted,red,-] (r6) -- (t5);
\draw [dotted,red,-] (r9) -- (t5);
\draw [dotted,red,-] (r3) -- (t6);
\draw [dotted,red,-] (r5) -- (t6);
\draw [dotted,red,-] (r7) -- (t6);
\draw [dotted,red,-] (r9) -- (t6);
\draw [dotted,red,-] (r8) -- (t7);
\draw [dotted,red,-] (r9) -- (t7);
\draw [dotted,red,-] (r3) -- (t8);
\draw [dotted,red,-] (r4) -- (t8);
\draw [dotted,red,-] (r7) -- (t8);
\draw [dotted,red,-] (r9) -- (t8);
\draw [dotted,red,-] (r3) -- (t9);
\draw [dotted,red,-] (r5) -- (t9);
\draw [dotted,red,-] (r6) -- (t9);
\draw [dotted,red,-] (r7) -- (t9);
\end{tikzpicture} \captionof{figure}{Network topology. Arrows represent communication links, and red-dotted lines indicate the message collection structure.} \label{fig:random_topology}
  \end{minipage}%
  \begin{minipage}[b]{0.60\linewidth}
    \centering
\begin{tabular}[b]{ |c|c| }
  \hline
  \multirow{ 2}{*}{SINR (dB)}  & Data rate \\
  & (units per slot) \\
  \hline \hline
  $(-\infty, 10]$ & 0 \\ \hline
  $(10, 20]$ & 1 \\ \hline
  $(20, 30]$ & 2 \\ \hline
  $(30, \infty)$ & 3\\ \hline
\end{tabular} \vspace{0.5cm} \caption{Data rates as a function of SINR} 
\end{minipage}
\label{tab:sinr}
\end{table}

\begin{figure} [t]  \vspace{-4mm}
  \subfloat[Message drop pr. is 0.1]{\includegraphics[width=1.75in]{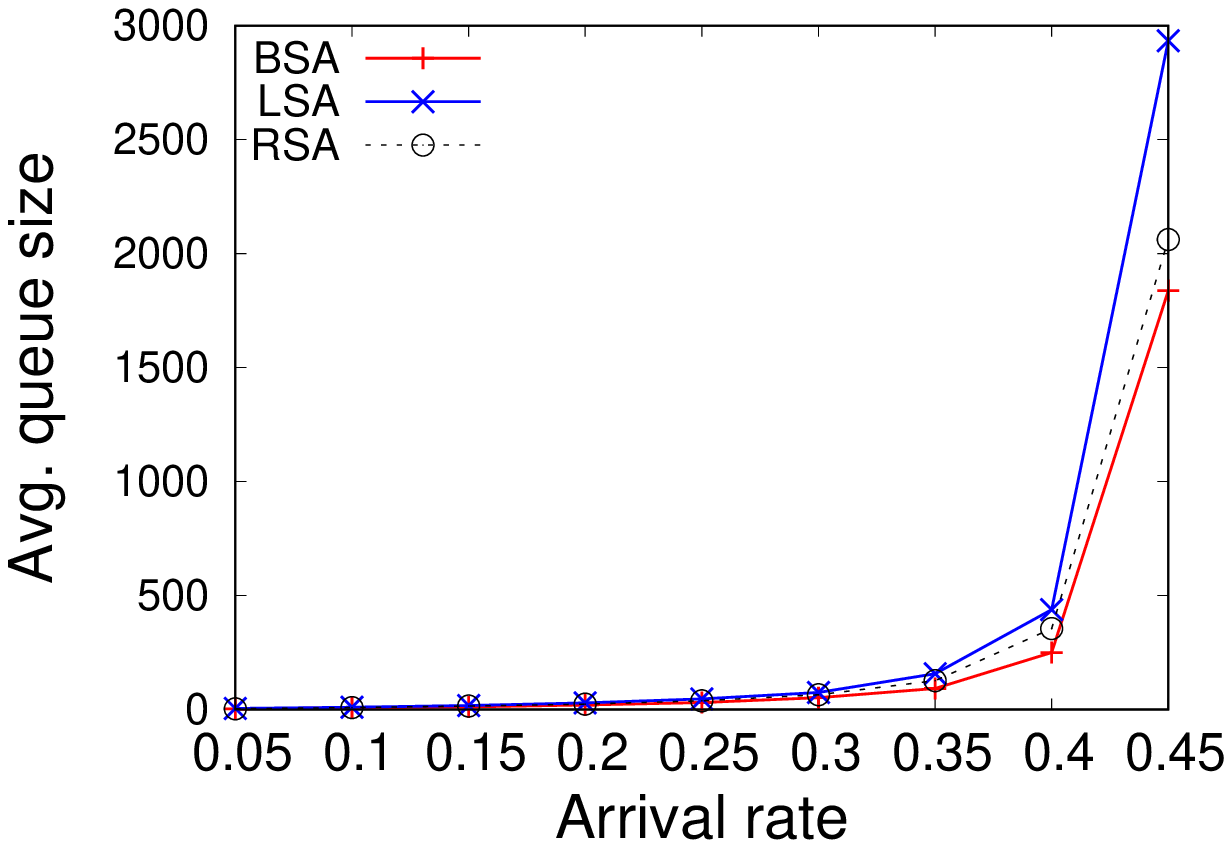} }
  \subfloat[Message drop pr. is 0.3]{\includegraphics[width=1.75in]{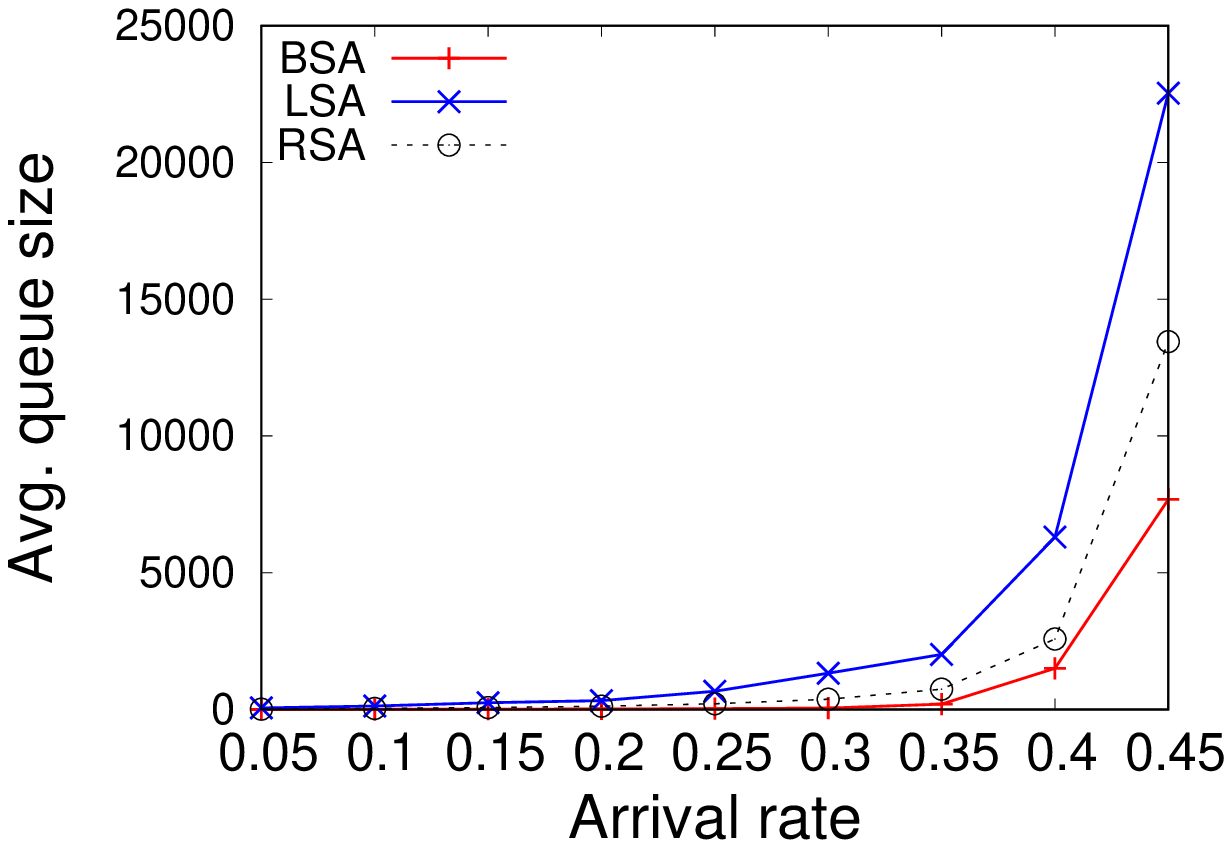} } 
\caption{Average queue size of the red-circled link in Fig \ref{fig:random_topology}.} \label{fig:delay_rand}  \vspace{-3mm}
\end{figure}

\begin{figure} [t] \vspace{-4mm}
  \subfloat[Packet arrival rate is 0.2]{\includegraphics[width=1.75in]{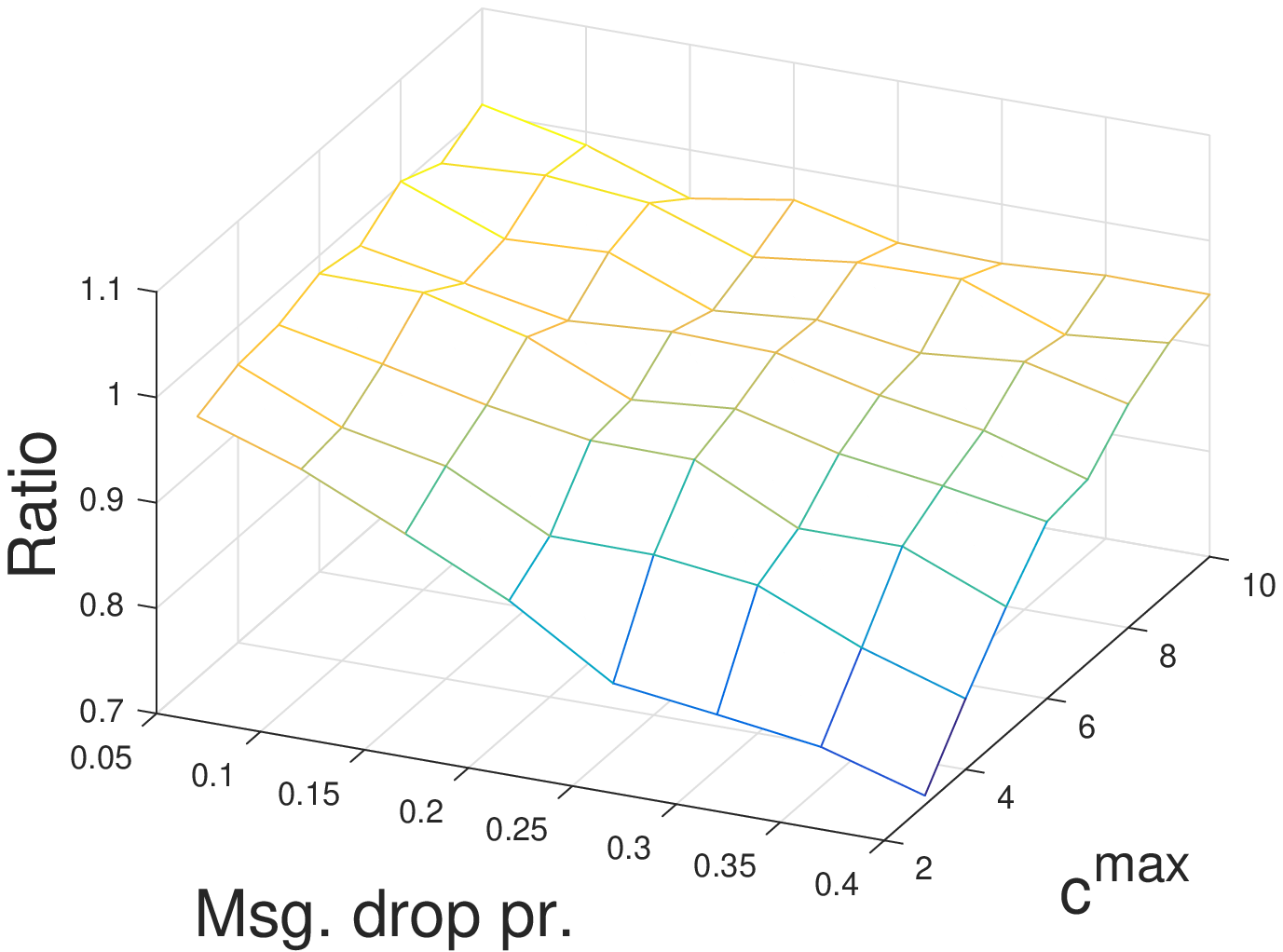} }
  \subfloat[Packet arrival rate is 0.4]{\includegraphics[width=1.75in]{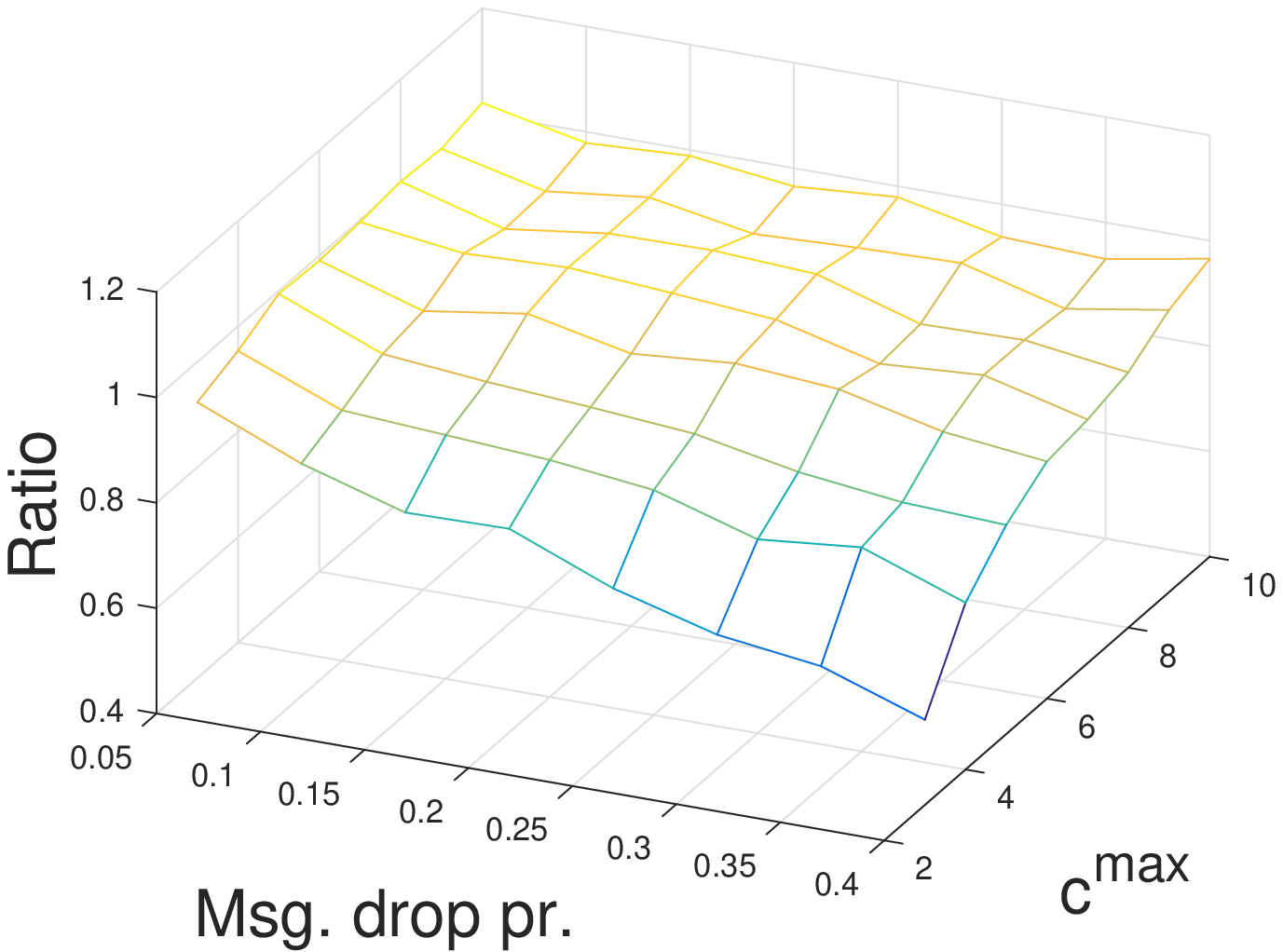} } 
\caption{The ratio of the averge queue size between RSA and LSA algorithms (RSA/LSA) with using different bounds in the various message drop regimes.} \label{fig:ratio}  \vspace{-3mm}
\end{figure}

%\section{Discussion and Conclusion}
%\section{Conclusion and Discussion}

\section{Discussion}

In this section, we discuss the relationship of our proposed approach with existing solutions from a practical viewpoint.

We remark that our implementation structure is comparable to those in \cite{borst2011distributed} and \cite{qian2010globally}. 
While the goal of achieving the desired stationary distribution is the same, their implementation is a little different from ours.
Their schemes are close to a \emph{proactive} approach in the following sense. Each node evaluates the performance objective of others based on its locally stored variable and prior knowledge on their functionals. Whenever a node changes its state, it proactively broadcasts (within a suitably defined local range) to other nodes to convey the new state, and those local variables can be updated upon receiving the message. 
On the other hand, our approach can be seen as a \emph{reactive} approach in that whenever a node wants to update its state, it sends out a request signal to other nodes, and then those nodes that received the signal reacts to the request by sending messages containing its functional difference as described in our algorithm.

We notice that the proactive approach has a few drawbacks. First, even in the one-hop interference model - a capacity model that only considers interference from nodes within a directly communicable range - the earlier mentioned broadcasting task has to be performed over a two-hop range, except for some special instances \cite{borst2011distributed}. Also, the condition that each node has a prior knowledge on the objective function of others may not be a realistic assumption in practice. In the SINR model, this condition requires that all pairs of links have knowledge of the channel gain terms between the transmitter of their own link and the receiver of their neighboring links, which is difficult to know a priori.

Regarding the message overhead, we take no position on one approach being better than the other, as it will greatly differ depending on the type of network topology and how to realize those message exchange mechanisms with particular communication systems.
Typically, broadcasting is easier than receiving different information from different nodes, however, it has been well known that ensuring the correct reception of broadcasting is a difficult job, which may incur additional overhead. Needless to say, doing that with two-hop broadcasting is even more difficult. 
To avoid this difficulty, \cite{borst2011distributed} and \cite{qian2010globally} suggest using \emph{out-dated} values, i.e., the most recently known values about the corresponding variables. However, the mismatch between the local variables and the actual ones will incur bias to the resulting stationary distribution, and thus whether it can achieve the same optimality is not clear. This fact has been neglected in those works.

In contrast, in our approach, in the one-hop interference model, we only require one-hop information, where its complexity can be robustly handled by the new approach we introduced in this paper. Furthermore, our algorithm does not need to know the precise form of the objective functions.
%, as long as they can be effectively tested and measured with a particular configuration in use. 
Hence, our implementation approach is advantageous and should be applicable to communication systems that go even beyond the SINR model.

\section{Conclusion}

In this paper, we investigate important practical considerations and performance issues that arise when the traditional SA is implemented in wireless networks with imperfect communications.
We recognize that various practical factors including the inherently noisy nature of wireless communications and the increasing message complexity in modern communication technologies can be critical sources that prevent the efficient realization of SA in general wireless networks. 
Our simulation results show that a straightforward solution to bypass this problem is not practical due to its slow operation speed. 
To tackle this problem, we propose a novel approach that allows the algorithm to operate with only partial observations on the system performance objective, which helps improve its computational speeds. 
We rigorously show that the new algorithm exhibits the same convergence in probability to the optimal states under the standard annealing technique.

%We take no position in this paper on the positive and negative aspects of our proposed approach when contrasted with the standard version of hardcore model based algorithm. Thorough comparison between the two different algorithms is yet to be carried out in this paper and is left to future work. 

\input

\small
\bibliographystyle{abbrv}
\bibliography{ref_all}
\normalsize

\appendix

In this appendix, we provide proofs for Proposition \ref{lem:LSA_equiv}, \ref{prop:ordering}, and Theorem \ref{thm:optimal}.
To that end, we first write down the transition probabilities of BSA, LSA and RSA algorithms, respectively denoted by $\bP^{B}$, $\bP^{L}$, and $\bP^{R}$, to be referred in the proofs. For notational simplicity, we denote for two configurations $\x,\x' \in \bX$ which differ only at one node $i \in \N$, i.e., $x_i \neq x'_i$ and $x_j = x'_j$ for all $j \in \N \backslash \{i\}$, that $\hat{i}(\x,\x')$ (or simply $\hat{i}$ when its definition is clear) indicates the node with the different state.

The transition probability of BSA algorithm, $P^{B}(\x,\x')$, for $\x,\x' \in \bX \ (\x \neq \x')$, is
\begin{eqnarray} 
P^{B}(\x,\x') = c(\x,\x') e^{- \beta [-\Delta(\x,\x')]^+}, \label{eq:tran_p_bsa}
\end{eqnarray}
where $\Delta(\x,\x') = \sum_{j \in N_{\hat{i}} \cup \{\hat{i}\}} f_j(\x') - f_j(\x)$ and
\begin{align*}
&c(\x,\x') =
\begin{cases} \frac{1}{n(|\M|-1)}, &\begin{array}{@{}l@{}}\text{if for some $i \in \N$,}\  x_i = x'_i \ \text{and},\\ x_j \neq x'_j, \forall j \in N_i \backslash \{i\} \end{array}\\
   0, &\mbox{if otherwise.}
\end{cases} 
\end{align*}

The transition probability of LSA algorithm, $P^{L}(\x,\x')$, for $\x,\x' \in \bX \ (\x \neq \x')$, is
\begin{eqnarray}
P^{L}(\x,\x') = c(\x,\x') q_{\hat{i}, N_{\hat{i}}}  e^{- \beta [-\Delta(\x,\x')]^+}. \label{eq:tran_p_lsa}
\end{eqnarray}

The transition probability of RSA algorithm $P^{R}(\x,\x')$, for $\x,\x' \in \bX \ (\x \neq \x')$, is
\begin{eqnarray}
P^{R}(\x,\x') = c(\x,\x') \sum_{S \subseteq N_{\hat{i}}} q_{\hat{i},S} e^{-\beta [-\Delta_{[S]}(\x,\x')]^+} \label{eq:tran_p_rsa}
\end{eqnarray}
where 
\begin{equation*}
    \Delta_{[S]}(\x,\x') = \sum_{j \in S\cup \{\hat{i}\}} f_j(\x') - f_j(\x) + \sum_{j \in N_{\hat{i}} \backslash \{S\}} b^{\hat{i}j}_{x_{\hat{i}},x'_{\hat{i}}}.
\end{equation*}

For all three algorithms, their self transition probabilities are obtained by $P^{B}(\x,\x) = 1 - \sum_{\x'\neq\x}P^{B}(\x,\x')$ (similarly for $P^{L}$ and $P^{R}$), for all $\x \in \bX$.

\hfill

\textbf{Proof of Proposition \ref{lem:LSA_equiv}. }
\begin{proof} Let $\PI^{L}$ be the stationary distribution of LSA algorithm.
Then, for any $\x, \x' \in \Omega \; (\x \neq \x')$, it holds
\begin{align*}
&&\pi^{L}(\x) P^{L}(\x,\x') &= \pi^{L}(\x') P^{L}(\x',\x)& \\
&\Leftrightarrow& \pi^{L}(\x) P^{B}(\x,\x') &= \pi^{L}(\x') P^{B}(\x',\x).&
\end{align*}
Since $\PI$ is the unique solution of $\PI^L$ in the above set of equations along with the probability constraint, $\sum_{\x} \pi^L(\x) = 1$, it follows $\PI = \PI^{L}$.
\end{proof}

\hfill 

\textbf{Proof of Proposition \ref{prop:ordering}. }

\begin{proof}
Note that for any $\x,\x' \in \bX$ in which only one node state is different, and for any $S \subseteq N_{\hat{i}}$, the following holds 
\begin{align}
& \Delta(\x,\x') = \Delta_{[N_{\hat{i}}]}(\x,\x') \nonumber \\
&= \sum_{j \in N_{\hat{i}} \cup \{\hat{i}\}} f_j(x'_{\hat{i}}, \x_{[-\hat{i}]}) - f_j(x_{\hat{i}}, \x_{[-\hat{i}]}) \nonumber \\
&\geq \sum_{j \in S \cup \{\hat{i}\}} f_j(x'_{\hat{i}}, \x_{[-\hat{i}]}) - f_j(x_{\hat{i}}, \x_{[-\hat{i}]}) \nonumber \\
& \;\;\; + \sum_{j \in N_{\hat{i}} \backslash \{S\}} \left( \min_{\x_{[-i]}} f_j(x'_{\hat{i}}, \x_{[-\hat{i}]}) - \max_{\x_{[-\hat{i}]}} f_j(x_{\hat{i}}, \x_{[-\hat{i}]}) \right)  \nonumber  \\
&\geq \sum_{j \in S \cup \{\hat{i}\}} f_j(x'_{\hat{i}}, \x_{[-\hat{i}]}) - f_j(x_{\hat{i}}, \x_{[-\hat{i}]}) + \sum_{j \in N_{\hat{i}} \backslash S} b^{\hat{i}j}_{x_{\hat{i}},x'_{\hat{i}}} \nonumber \\
&= \Delta_{[S]}(\x,\x'), \label{eq:subset_inequality}
\end{align} 
and therefore we have
\begin{align}
    e^{-\beta[-\Delta_{[S]}(\x,\x')]^+} \leq e^{-\beta[-\Delta(\x,\x')]^+}. \label{eq:subset_inequality2}
\end{align}
Since $q_{i,N_i} \leq \sum_{S \subseteq N_i} q_{i,S} = 1$ holds for all $i \in \N$, the statement follows from the transition probabilitiy of each algorithm in Eq. (\ref{eq:tran_p_bsa}-\ref{eq:tran_p_rsa}) and the inequality in Eq. (\ref{eq:subset_inequality2}).
\end{proof}

\hfill

\textbf{Proof of Theorem \ref{thm:optimal}.}

\begin{proof}
Our proof for the theorem is based on the technique introduced in \cite{connors1989simulated}, in which the annealing optimality of the original SA is proven. We first briefly overview the major steps therein, and apply them to show the optimality of RSA algorithm.

The authors in \cite{connors1989simulated} consider a class of Markov chains whose transition probabilities can be written as a form of Eq. (\ref{eq:trans_pr}), in which the conventional simulated annealing algorithm can be represented by setting $V_{ij} = [f(j) - f(i)]^+$ and $\epsilon(t) = e^{-\beta(t)}$ for achieving minimum $f(\cdot)$.
%where $V_{ij}, c_{ij} \geq 0,$ for all $i,j$, $\sum_{j \neq i} c_{ij} = 1$, for all $i$, $p_{ii}(t) = 1 - \sum_{j \neq i} p_{ij}(t)$, and $0 \leq \epsilon(t) \leq 1$, $t \geq 1$ is the parameter related to the cooling schedule. 
%Note that the conventional simulated annealing algorithm can be represented by this form with setting $V_{ij} = [f(j) - f(i)]^+$ in which minimum $f(\cdot)$ is sought.
%Here we focus on the popularly used inverse logarithmic cooling schedule, i.e., $\epsilon(t) = t^{-d}$, where $d > 0$ is some constant.
For this class of Markov chains, they define the \emph{recurrence order} for each state and transition of the Markov chain as follows.

\begin{definition}
The order of recurrence of a state $i \in \Omega$, denoted $\alpha_i$, is
\begin{eqnarray*}
\alpha_i := \begin{cases} -\infty, &\mbox{if}~\sum_{t=1}^{\infty} \mu_i(t) < \infty, \\
  p^{-}, &\mbox{if}~p=\sup\left\{ c \geq 0 : \sum_{t=1}^{\infty} \epsilon(t)^c \mu_i(t) = \infty \right\}\\
  & ~\mbox{and}~\sum_{t=1}^{\infty}\epsilon(t)^p \mu_i(t) < \infty, \\
  p &\mbox{if}~p=\max\left\{ c \geq 0 : \sum_{t=1}^{\infty} \epsilon(t)^c \mu_i(t)= \infty \right\}
\end{cases} 
\end{eqnarray*}
\end{definition}
where $\mu_i(t) = \pr\{X(t) = i\}$ and $p$ is regarded as strictly larger than $p^-$, i.e., $p > p^- > p - \delta_0$ for some $\delta_0 > 0$.
Similarly, the \emph{order of recurrence of the transition from $i$ to $j$} is defined by,

\begin{definition}
The order of recurrence of the transition from $i$ to $j$, denoted $\alpha_{ij}$, is
\begin{eqnarray*}
\alpha_{ij} := \begin{cases} -\infty, &\mbox{if}~\sum_{t=1}^{\infty} \mu_{ij}(t) < \infty, \\
  p^{-}, &\mbox{if}~p=\sup\left\{ c \geq 0 : \sum_{t=1}^{\infty} \epsilon(t)^c \mu_{ij}(t) = \infty \right\}\\
  & ~\mbox{and}~\sum_{t=1}^{\infty}\epsilon(t)^p \mu_{ij}(t) <  \infty, \\
  p &\mbox{if}~p=\max\left\{ c \geq 0 : \sum_{t=1}^{\infty} \epsilon(t)^c \mu_{ij}(t)= \infty \right\}
\end{cases} 
\end{eqnarray*}
\end{definition}
where $\mu_{ij}(t) = \pr\{X(t)=i, X(t+1)=j\}$.
They also defined $\rho$, the \emph{order of cooling} of $\{\epsilon(t)\}$, as follows.
\begin{definition}
The order of the cooling schedule $\{\epsilon(t)\}$, denoted $\rho$, is defined as
\begin{eqnarray*}
\rho := \begin{cases} -\infty, &\mbox{if}~\sum_{t=1}^{\infty} \epsilon(t) < \infty, \\
  p^{-}, &\mbox{if}~p=\sup\left\{ c \geq 0 : \sum_{t=1}^{\infty} \epsilon(t)^c  = \infty \right\} \\
  & ~\mbox{and}~\sum_{t=1}^{\infty}\epsilon(t)^p  < \infty, \\
  p &\mbox{if}~p=\max\left\{ c \geq 0 : \sum_{t=1}^{\infty} \epsilon(t)^c = \infty \right\}
\end{cases} 
\end{eqnarray*}
\end{definition}

Having defined the above terms, we summarize the main results established in \cite{connors1989simulated} as well as in \cite{connors1988balance, desai1994quasi}, which are valid under the following mild assumptions.

\textbf{Assumptions}
\begin{enumerate}
    \item $d$ is sufficiently large. In particular, $d \! \geq  \! 2\! \sum_{(i,j) | V_{i\!j} < \infty} \!\! V_{ij}$.
    \item $\exists \; j \in N_i \; \Leftrightarrow \; \exists \; i \in N_j \; \forall i,j \in \Omega$.
\end{enumerate}

\begin{lemma} \label{lem:org_sim} \cite{connors1989simulated, connors1988balance, desai1994quasi} Under the above assumptions, the followings hold.
\begin{enumerate} %[label=(\roman*)]
\item The relation between $\alpha_i$ and $\alpha_{ij}$ is
    \begin{align} \label{eq:prev_results1}
        \alpha_{ij} = \alpha_i - V_{ij} ~~\mbox{for all}~i,j \in \Omega \ (i\neq j),
    \end{align}
\item There is a balance of recurrence orders across every edge in the graph of the Markov chain such that
    \begin{align}
        \max_{i \in A, j \in A^c} \alpha_{ij} = \max_{i \in A, j \in A^c} \alpha_{ji} ~~\mbox{for all}~A \subseteq \Omega. \label{eq:order_balance}
    \end{align}
\item $\{\alpha_i\}$ is the unique solution $\{\lambda_i\}$ of
    \begin{align*} 
        \max_{i \in A, j \in A^c} \lambda_i - V_{ij} &= \max_{i \in A, j \in A^c} \lambda_j - V_{ji} ~~\mbox{for all}~A \subseteq \Omega, \\
        \max_{i \in \Omega} \lambda_i &= \rho.
    \end{align*}
\item Recall that $d$ is the rate of the cooling schedule $\epsilon(t) = t^{-1/d}$, $t \geq 1$. It can be shown that
\begin{equation*}
d = \max_{i \in \Omega} \; \alpha_i = \rho, \text{ and } \alpha_i = \rho \text{ iff } i \in \Omega^*,
\end{equation*}
where $\Omega^* = \{i \in \Omega: f(i) = \min_{j \in \Omega} f(j) \}$.
\item Let $\Omega^{\dagger}$ be the set of states of the largest recurrence order, i.e., $\Omega^{\dagger} := \{ i \in \Omega : \alpha_i = \max_{j\in \Omega} \alpha_j\}$. Then\footnote{In \cite{connors1989simulated, connors1988balance}, the convergence analysis is only conducted for showing $\limsup_{T \rightarrow \infty} \frac{1}{T} \sum_{t=1}^T \pr\{X(t) \in \Omega^{\dagger }\} = 1$, for a general class of cooling schedules, however it can be shown that the limit holds for the particular cooling schedule $\epsilon(t) = t^{-1/d}$ \cite{desai1994quasi}.},
    \begin{align}
        \lim_{T \rightarrow \infty} \frac{1}{T} \sum_{t=1}^T \pr\{X(t) \in \Omega^{\dagger }\} = 1. \label{eq:prev_results3}
    \end{align}
\end{enumerate}
\end{lemma}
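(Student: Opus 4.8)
The plan is to establish the five items in order, reading everything off the asymptotics of the occupation measures $\mu_i(t)$ and $\mu_{ij}(t)$ through their recurrence orders. Item~1 I would obtain by a one-line computation: since $\mu_{ij}(t) = \mu_i(t)\,p_{ij}(t) = c_{ij}\,\mu_i(t)\,\epsilon(t)^{V_{ij}}$, the weighted tail series satisfies $\sum_t \epsilon(t)^{c}\mu_{ij}(t) = c_{ij}\sum_t \epsilon(t)^{\,c+V_{ij}}\mu_i(t)$, so for every edge with $c_{ij}>0$ the critical exponent separating divergence from convergence is shifted by exactly $V_{ij}$; comparing against the definitions of $\alpha_i$ and $\alpha_{ij}$ yields $\alpha_{ij} = \alpha_i - V_{ij}$.

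For item~2, the balance across an arbitrary cut $A\subseteq\Omega$, I would start from the exact conservation identity for a finite chain, $\pr\{X(T+1)\in A\} - \pr\{X(1)\in A\} = \sum_{t=1}^{T}\big(\sum_{i\in A^c,\,j\in A}\mu_{ij}(t) - \sum_{i\in A,\,j\in A^c}\mu_{ij}(t)\big)$, whose left side is bounded by $1$ uniformly in $T$. This controls only the unweighted ($c=0$) flow difference, so the next step is the key device: a summation-by-parts (Abel) argument using that $\epsilon(t)^{c}$ is monotone and bounded upgrades this to boundedness of $\sum_t \epsilon(t)^{c}\big(\text{outflow}_t - \text{inflow}_t\big)$ for every $c\ge 0$. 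Hence the aggregated outflow and inflow series converge or diverge together at each $c$; since the recurrence order of a finite sum of nonnegative series is the maximum of the summand orders, this forces $\max_{i\in A,\,j\in A^c}\alpha_{ij} = \max_{i\in A,\,j\in A^c}\alpha_{ji}$. Substituting item~1 into this identity gives precisely the fixed-point system of item~3; uniqueness I would settle by a max-plus / shortest-path argument on the weighted graph with edge weights $V_{ij}$, showing the homogeneous balance equations determine $\{\alpha_i\}$ up to an additive constant that is then pinned by the normalization $\max_i\lambda_i = \rho$.

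Item~4 is where the optimization structure enters, and it is the step I expect to be the main obstacle. The cooling order is immediate, $\sum_t t^{-c/d}$ diverges iff $c\le d$, so $\rho=d$; the substantive claim is that the unique solution of the item~3 system, specialized to $V_{ij}=[f(j)-f(i)]^+$, attains its maximum value $\rho$ exactly on the global minima $\Omega^*$. I would prove this by solving the max-plus system explicitly in terms of the min--max communication heights of the landscape $f$ (the Freidlin--Wentzell / Hajek depth structure), showing every non-optimal state is separated from the top order by a strictly positive escape penalty while all states of $\Omega^*$ share the top order $\rho$. Finally, for item~5, once $\Omega^{\dagger}=\Omega^*$ is identified as the set of top order, every $i\notin\Omega^{\dagger}$ has $\alpha_i<\rho$, so $\sum_t \epsilon(t)^{\rho}\mu_i(t)<\infty$; with $\epsilon(t)^{\rho}=t^{-1}$ this reads $\sum_t \tfrac{1}{t}\pr\{X(t)\notin\Omega^{\dagger}\}<\infty$, and Kronecker's lemma converts the convergence of this $\tfrac{1}{t}$-weighted series into $\tfrac{1}{T}\sum_{t=1}^{T}\pr\{X(t)\notin\Omega^{\dagger}\}\to 0$, which is the asserted time-average convergence.
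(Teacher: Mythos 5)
First, a point of context: the paper does not prove this lemma at all --- it is stated as a summary of results imported from \cite{connors1989simulated, connors1988balance, desai1994quasi}, and the paper's own work (Lemma~\ref{lem:general} onward) merely builds on top of it. So your attempt is competing against the proofs in those references rather than against anything in the paper, and judged that way it is a mixed success. Items 1, 2, and 5 are essentially correct and essentially the arguments of the cited works: item 1 is the Chapman--Kolmogorov computation (the paper itself repeats this exact step when generalizing Eq.~(\ref{eq:prev_results1}) to the RSA form in Lemma~\ref{lem:general}), modulo the small caveat that the identity only makes sense on edges with $c_{ij}>0$ (if $c_{ij}=0$ then $\alpha_{ij}=-\infty$ regardless of $\alpha_i - V_{ij}$); item 2's cut-conservation identity plus Abel summation with the monotone weights $\epsilon(t)^c$ is the heart of the balance-of-recurrence-orders argument in \cite{connors1988balance}; and item 5's Kronecker-lemma step is precisely the refinement that upgrades the $\limsup$ statement of \cite{connors1989simulated, connors1988balance} to a genuine limit for the schedule $\epsilon(t)=t^{-1/d}$, which is exactly what the paper's footnote attributes to \cite{desai1994quasi}.

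The genuine gaps are in items 3 and 4. For item 3, the assertion that the homogeneous balance equations determine $\{\lambda_i\}$ ``up to an additive constant'' is exactly what needs proof: the system ranges over \emph{all} cuts $A\subseteq\Omega$, and its unique solvability under the normalization $\max_i \lambda_i = \rho$ is a real theorem in \cite{connors1988balance}, not a routine max-plus observation. More seriously, in item 4 the equivalence $\alpha_i=\rho$ iff $i\in\Omega^*$ is simply false without Assumption 1: if the cooling is too fast (i.e., $d$ is smaller than the depth of some non-global local minimum $i$), the chain is trapped in that minimum's basin with positive probability, $\mu_i(t)$ stays bounded away from zero, and then $\alpha_i = d = \rho$ even though $i\notin\Omega^*$. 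Any ``strictly positive escape penalty'' argument must therefore invoke $d \geq 2\sum_{(i,j)} V_{ij}$ --- this is what guarantees the cooling order dominates every barrier in the landscape --- and your sketch never uses this assumption anywhere. Citing the Hajek/Freidlin--Wentzell depth structure points to the right circle of ideas, but the step that separates every non-optimal state from the top recurrence order is precisely the part that is missing, and it is the deepest part of the cited proofs.
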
 

Note that the transition probability of RSA algorithm in Eq. (\ref{eq:tran_p_rsa}) cannot be represent by Eq. (\ref{eq:trans_pr}).
To pursue the above approach to verifying the optimality of RSA algorithm, it is necessary to consider a more general class of Markov chains which has the transition probabilities of the form,
\begin{align} \label{eq:new_form}
    p_{ij}(t) = c_{ij} \sum_{m \in M^{ij}} a^m_{ij} \epsilon(t)^{V^{m}_{ij}}, 
\end{align}
where $M^{ij}$ is some finite set associated with the transition from $i$ to $j$, $i,j \in \Omega$, and $a^m_{ij} \in (0,1]$ is a probability of an element $m \in M^{ij}$ defined over its sample space $M^{ij}$, and $V^{m}_{ij} \geq 0$ for all $m \in M^{ij}$.
Now the transition probability of RSA algorithm can be represented by Eq. (\ref{eq:new_form}) by replacing the corresponding terms: $\Omega \Leftrightarrow \bX$, $i,j \in \Omega \Leftrightarrow \x,\x' \in \bX$, $c_{ij} \Leftrightarrow c(\x,\x')$, $M^{ij} \Leftrightarrow S(\x,\x') := \{ S \in \mathcal{P}(N_{i'}) : q_{\hat{i}, S} > 0\}$, $m \in M^{ij} \Leftrightarrow S \in S(\x,\x')$, $a^m_{ij} \Leftrightarrow q_{\hat{i}, S}$, $V^m_{ij} \Leftrightarrow V^S(\x,\x') := \Delta_{[S]}(\x,\x')$, where $\mathcal{P}(A)$ is the powerset of a set $A$.
For this form, we obtain the generalized correspondence of eq. (\ref{eq:prev_results1}) as stated in the following lemma.

\begin{lemma} \label{lem:general}
$\alpha_{ij} = \alpha_{i} - \max_{m \in M^{ij}} V^m_{ij}, \;\; \forall i,j \in \Omega.$
\end{lemma}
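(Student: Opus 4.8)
The plan is to derive the generalized recurrence-order relation $\alpha_{ij} = \alpha_i - \max_{m \in M^{ij}} V^m_{ij}$ by mimicking the logic that yields Eq.~(\ref{eq:prev_results1}) in the single-term case, but tracking the effect of the finite sum of exponentials in Eq.~(\ref{eq:new_form}). The key observation driving everything is that the order of recurrence, as defined through the summability thresholds $\sum_t \epsilon(t)^c \mu(t)$, is insensitive to constant multiplicative factors and to lower-order exponents: when several terms $\epsilon(t)^{V^m_{ij}}$ are summed, the dominant behavior as $\epsilon(t) \downarrow 0$ is governed entirely by the \emph{smallest} exponent, i.e.\ by $\min_{m} V^m_{ij}$, since that term decays slowest. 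Thus I expect the factor $\sum_{m \in M^{ij}} a^m_{ij}\epsilon(t)^{V^m_{ij}}$ to be asymptotically comparable (up to constants bounded away from $0$ and $\infty$) to $\epsilon(t)^{\min_m V^m_{ij}}$.

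First I would make precise the relationship $\mu_{ij}(t) = \mu_i(t)\, p_{ij}(t)$, since $\mu_{ij}(t) = \pr\{X(t)=i,X(t+1)=j\} = \pr\{X(t)=i\}\,p_{ij}(t)$ by the Markov property. Substituting the form of $p_{ij}(t)$ from Eq.~(\ref{eq:new_form}), I get
\begin{equation*}
    \mu_{ij}(t) = \mu_i(t)\, c_{ij} \sum_{m \in M^{ij}} a^m_{ij} \epsilon(t)^{V^m_{ij}}.
\end{equation*}
Next I would establish the two-sided bound that sandwiches the sum between $\epsilon(t)^{\min_m V^m_{ij}}$ and a constant multiple thereof. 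Writing $V^*_{ij} := \min_{m \in M^{ij}} V^m_{ij}$ and noting $\epsilon(t) \in [0,1]$, every term satisfies $\epsilon(t)^{V^m_{ij}} \le \epsilon(t)^{V^*_{ij}}$, while keeping only the minimizing term gives a lower bound; since each $a^m_{ij} \in (0,1]$ and $|M^{ij}|$ is finite, there exist constants $0 < \underline{c} \le \overline{c} < \infty$ with
\begin{equation*}
    \underline{c}\,\epsilon(t)^{V^*_{ij}} \;\le\; \sum_{m \in M^{ij}} a^m_{ij}\,\epsilon(t)^{V^m_{ij}} \;\le\; \overline{c}\,\epsilon(t)^{V^*_{ij}}.
\end{equation*}
Because the order of recurrence is defined purely through which series $\sum_t \epsilon(t)^c \mu_{ij}(t)$ diverge versus converge, and multiplying the summand by a factor in $[\underline{c}\,c_{ij},\,\overline{c}\,c_{ij}]$ (bounded away from $0$ and $\infty$ whenever $c_{ij} > 0$) does not change convergence or divergence of any of these series, the threshold $c$ separating the two regimes for $\mu_{ij}(t)$ is shifted relative to that for $\mu_i(t)$ by exactly $V^*_{ij}$. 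This is precisely the single-term argument of Eq.~(\ref{eq:prev_results1}) applied with the effective exponent $V^*_{ij} = \min_m V^m_{ij}$.

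The delicate point I would flag is the sign: the claimed relation involves $\max_m V^m_{ij}$, not $\min_m V^m_{ij}$, whereas the decay analysis above naturally produces the minimum. The resolution is in the definition of $V^m_{ij}$ as a correspondence of $\Delta_{[S]}$: I would carefully check that when $V^m_{ij} \Leftrightarrow \Delta_{[S]}(\x,\x')$ is mapped into the $\epsilon(t)^{V}$ form, the exponent that actually appears is $[-\Delta_{[S]}(\x,\x')]^+$ (cf.\ Eq.~(\ref{eq:tran_p_rsa})), and that Proposition~\ref{prop:ordering}'s inequality $\Delta(\x,\x') \ge \Delta_{[S]}(\x,\x')$ translates a minimum over one quantity into a maximum over the exponent. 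Confirming this bookkeeping — that minimizing the surviving probability mass corresponds to maximizing the exponent $V^m_{ij}$ in the normalization convention used — is the main obstacle, and once it is settled the summability argument closes the proof. I would then invoke the single-term identity from Lemma~\ref{lem:org_sim}, item~1, with $V^*_{ij}$ in the role of $V_{ij}$ to conclude $\alpha_{ij} = \alpha_i - \max_{m \in M^{ij}} V^m_{ij}$.
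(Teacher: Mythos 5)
Your proposal follows essentially the same route as the paper's proof: factor $\mu_{ij}(t)=\mu_i(t)\,p_{ij}(t)$ by the Markov property (the paper calls this Chapman--Kolmogorov), note that a finite sum of nonnegative series diverges iff at least one term's series diverges, and thereby reduce the multi-exponent sum to a single effective exponent; your two-sided sandwich bound is just a more explicit version of that reduction. The substantive point is the $\min$/$\max$ tension you flag, and your analysis of it is the correct one. For a chain literally of the form of Eq.~(\ref{eq:new_form}), with $V^m_{ij}\geq 0$ the exponents of $\epsilon(t)\in[0,1]$, the slowest-decaying term (smallest exponent) governs divergence, the set $\{c\geq 0:\sum_t\epsilon(t)^{c+V^m_{ij}}\mu_i(t)=\infty\}$ is a union over $m$ of nested intervals, and its supremum is $\alpha_i-\min_{m\in M^{ij}}V^m_{ij}$; the paper's final display, which replaces the sum by the single term with $\max_m V^m_{ij}$, does not follow from its own preceding ``iff some term diverges'' step. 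The statement with $\max$ is only reconcilable with the rest of the paper under the correspondence $V^S(\x,\x')=\Delta_{[S]}(\x,\x')$ rather than the actual exponent $[-\Delta_{[S]}(\x,\x')]^+$ appearing in Eq.~(\ref{eq:tran_p_rsa}) --- precisely the bookkeeping you identify. Under the exponent convention the dominant exponent is $\min_S[-\Delta_{[S]}(\x,\x')]^+=[-\Delta(\x,\x')]^+$, attained at $S=N_{\hat{i}}$ by Eq.~(\ref{eq:subset_inequality}), which is exactly what Lemma~\ref{lem:equiv} needs, so the downstream optimality argument is unaffected. In short: your derivation is sound and is in fact more careful than the paper's on this point; only your closing sentence, which restates the conclusion with $\max$ to match the lemma as printed, is a notational concession rather than something your argument establishes.
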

\begin{proof}
By the Chapman-Kolmogorov equation, we obtain
\begin{equation*}
    \mu_{ij}(t) = c_{ij} \sum_{m \in M^{ij}} a_{ij}^m \epsilon(t)^{V^{m}_{ij}} \mu_i(t)
\end{equation*}
and observe that 
\begin{eqnarray*}
\sum_{t=1}^{\infty} \sum_{m\in M^{ij}} a_{ij}^m \epsilon(t)^{c+V^m_{ij}} \mu_i(t) = \infty
\end{eqnarray*}
holds if and only if there exists some $m \in M^{ij}$ such that
\begin{eqnarray*}
\sum_{t=1}^{\infty} \epsilon(t)^{c+V^m_{ij}} \mu_i(t) = \infty.
\end{eqnarray*}
Therefore, we have
\begin{eqnarray*}
&\sup\{ c \geq 0 : \sum_{t=1}^{\infty} \sum_{m \in M^{ij}} a_{ij}^m \epsilon(t)^{c+V^m_{ij}} \mu_i(t) = \infty \} \\
&= \sup\{ c \geq 0 : \sum_{t=1}^{\infty} \epsilon(t)^{c+ \max_{m \in M^{ij}} V^m_{ij}} \mu_i(t) = \infty \}
\end{eqnarray*}
from which the result follows by the definitions of $\alpha_i$ and $\alpha_{ij}$.
\end{proof}

Let $\alpha(\x)$ and $\alpha(\x,\x')$ be the recurrence order of state $\x$ and that of state transition from $\x$ to $\x'$, for $\x,\x' \in \Omega \ (\x \neq \x')$ due to the Markov chain induced by BSA Algorithm, and let $\hat{\alpha}(\x)$ and $\hat{\alpha}(\x,\x')$ be those of the chain from RSA algorithm, respectively. In the following lemmas, we verify that all the recurrence orders for any state and transition are equivalent between the two algorithms.

\begin{lemma} \label{lem:max_equiv}
$\max_{\x \in \bX} \hat{\alpha}(\x) = \rho = d$.
\end{lemma}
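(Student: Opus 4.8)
The plan is to prove the two equalities separately. The identity $\rho = d$ is immediate from the chosen cooling schedule, while $\max_{\x \in \bX}\hat{\alpha}(\x) = \rho$ I would argue as a distribution-level fact that holds for \emph{any} time-inhomogeneous Markov chain on the finite space $\bX$, independent of the RSA-specific transition structure. For the first equality I would substitute $\epsilon(t) = t^{-1/d}$ into the definition of $\rho$ and note that $\sum_{t=1}^{\infty}\epsilon(t)^c = \sum_{t=1}^{\infty} t^{-c/d}$ diverges exactly when $c \leq d$ and converges when $c > d$; since the harmonic series ($c=d$) diverges, the maximum in the definition is attained and $\rho = d$.

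For the lower bound $\max_{\x}\hat{\alpha}(\x) \geq d$, the key device is the normalization $\sum_{\x \in \bX}\mu_{\x}(t) = 1$, valid at every $t$ for the occupation probabilities $\mu_{\x}(t) = \pr\{X(t) = \x\}$. Summing the defining series over all states and exchanging the (nonnegative) sums by Tonelli gives $\sum_{\x \in \bX}\sum_{t=1}^{\infty}\epsilon(t)^{d}\mu_{\x}(t) = \sum_{t=1}^{\infty}\epsilon(t)^{d} = \sum_{t=1}^{\infty} t^{-1} = \infty$. Since $\bX$ is finite, at least one state $\x^{*}$ must contribute an infinite inner sum, which by the definition of the recurrence order forces $\hat{\alpha}(\x^{*}) \geq d$.

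For the upper bound $\max_{\x}\hat{\alpha}(\x) \leq d$ I would use the trivial estimate $\mu_{\x}(t) \leq 1$: for any $c > d$ we get $\sum_{t=1}^{\infty}\epsilon(t)^{c}\mu_{\x}(t) \leq \sum_{t=1}^{\infty}\epsilon(t)^{c} = \sum_{t=1}^{\infty} t^{-c/d} < \infty$ for every $\x$, so no state can have recurrence order exceeding $c$; letting $c \downarrow d$ yields $\hat{\alpha}(\x) \leq d$ for all $\x$. Combining the two bounds shows the maximum is exactly $d$, realized at $\x^{*}$ in the divergence-attaining (the ``$p$'') branch rather than the ``$p^{-}$'' branch, so $\max_{\x}\hat{\alpha}(\x) = d = \rho$.

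The argument is structurally routine; the one point demanding care is the boundary exponent $c = d$ together with the $p^{-}$ branch of the recurrence-order definition. I sidestep this by carrying out the lower-bound computation \emph{exactly at} $c = d$, exploiting that the harmonic series diverges so that the supremum in the definition is genuinely attained rather than merely approached. The main conceptual point to flag is that this lemma does \emph{not} invoke the generalized transition form of Eq.~(\ref{eq:new_form}) or Lemma~\ref{lem:general}; those are needed only for the later edge-balance comparison between the RSA and BSA chains, whereas the equality of the maximal recurrence order with $\rho$ is inherited from the cooling schedule alone.
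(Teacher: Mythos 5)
Your proof is correct and follows essentially the same route as the paper's: $\rho = d$ by direct evaluation of $\sum_t \epsilon(t)^c = \sum_t t^{-c/d}$, and $\max_{\x}\hat{\alpha}(\x) = d$ by summing $\sum_t \epsilon(t)^d \mu_{\x}(t)$ over the finite state space using $\sum_{\x}\mu_{\x}(t)=1$ to force divergence at some state. Your explicit upper-bound step via $\mu_{\x}(t) \leq 1$ and your remark that the argument is independent of the RSA transition structure are both accurate (the paper leaves the former implicit), so there is nothing to correct.
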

\begin{proof}
Note that $\sum_{t = 1}^{\infty} \epsilon(t)^c < \infty$ for $c > d$ and $\sum_{t = 1}^{\infty} \epsilon(t)^c = \infty$ for $c \leq d$, and thus by the definition of $\rho$, $d = \rho$. On the other hand,
\begin{equation*}
    \sum_{t = 1}^{\infty} \epsilon(t)^d = \sum_{\x \in \bX} \left( \sum_{t=1}^{\infty} \epsilon(t)^d \mu_{\x}(t) \right) = \infty,
\end{equation*}
where $\mu_{\x}(t) = \pr\{X(t) = \x\}$, implying $ \sum_{t=1}^{\infty} \epsilon(t)^d \mu_{\x}(t) = \infty$ for some $\x \in \bX$. Therefore, $\max_{\x \in \bX} \alpha(\x) = d$, and the same analysis also applies to $\hat{\alpha}(\x)$.
\end{proof}

\begin{lemma} \label{lem:equiv}
$\alpha(\x) = \hat{\alpha}(\x), \; \text{and} \; \alpha(\x,\x') = \hat{\alpha}(\x,\x')$, for all $\x,\x' \in \Omega$, 
\end{lemma}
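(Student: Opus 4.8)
The plan is to establish the two equalities in Lemma \ref{lem:equiv} by first proving the statement about transitions, $\alpha(\x,\x') = \hat{\alpha}(\x,\x')$, and then leveraging it together with Lemma \ref{lem:general} to conclude the equality of state recurrence orders $\alpha(\x) = \hat{\alpha}(\x)$. The key observation that drives the transition equality is that, by Lemma \ref{lem:general} applied to the generalized form (\ref{eq:new_form}) of RSA, the relevant exponent governing $\hat{\alpha}(\x,\x')$ is $\max_{S \in S(\x,\x')} V^S(\x,\x') = \max_S (-\Delta_{[S]}(\x,\x'))^+$. By the monotonicity established in (\ref{eq:subset_inequality}), namely $\Delta_{[S]}(\x,\x') \leq \Delta(\x,\x')$ for every $S \subseteq N_{\hat{i}}$ with equality attained at $S = N_{\hat{i}}$, the quantity $-\Delta_{[S]}(\x,\x')$ is maximized (and hence the $(\cdot)^+$ as well) precisely by taking the smallest admissible subset, while the full set $S = N_{\hat{i}}$ recovers the BSA exponent $V_{\x\x'} = (-\Delta(\x,\x'))^+$. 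Thus the RSA effective exponent $\max_S V^S(\x,\x')$ need not coincide with the BSA exponent term-by-term; the point is rather that the \emph{smallest} exponent among the RSA terms equals the BSA exponent, and it is the smallest exponent that dominates the recurrence-order computation as $\epsilon(t) \to 0$.

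First I would make this precise by unwinding the definitions of $\alpha(\x,\x')$ and $\hat{\alpha}(\x,\x')$ through the Chapman--Kolmogorov identities $\mu_{\x\x'}(t) = c(\x,\x') e^{-\beta(t) V_{\x\x'}} \mu_{\x}(t)$ for BSA and $\mu_{\x\x'}(t) = c(\x,\x') \sum_{S} q_{\hat{i},S} \epsilon(t)^{V^S(\x,\x')} \mu_{\x}(t)$ for RSA. Since each $q_{\hat{i},S} > 0$ for $S \in S(\x,\x')$ and each term is nonnegative, the sum $\sum_t \sum_S q_{\hat{i},S} \epsilon(t)^{c + V^S} \mu_{\x}(t)$ diverges iff at least one summand-series diverges, i.e. iff $\sum_t \epsilon(t)^{c + \min_S V^S}\mu_{\x}(t)$ diverges (the smallest exponent governs divergence because $\epsilon(t) \leq 1$). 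Because $\min_{S} V^S(\x,\x') = V^{N_{\hat{i}}}(\x,\x') = V_{\x\x'}$ by the equality case of (\ref{eq:subset_inequality}), the supremum/maximum defining $\hat{\alpha}(\x,\x')$ coincides with that defining $\alpha(\x,\x')$, \emph{provided} the underlying state order $\hat{\alpha}(\x)$ equals $\alpha(\x)$. This interdependence forces a joint/inductive argument rather than proving the two equalities in isolation.

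To break the circularity I would argue simultaneously by appealing to item 3 of Lemma \ref{lem:org_sim}: the recurrence orders are the \emph{unique} solution of the balance equations (\ref{eq:order_balance}) subject to the normalization $\max \lambda_i = \rho$. The strategy is to show that the RSA recurrence orders $\{\hat{\alpha}(\x)\}$ satisfy exactly the same balance system as $\{\alpha(\x)\}$. By Lemma \ref{lem:max_equiv} both satisfy $\max_{\x} \hat{\alpha}(\x) = \rho = d = \max_{\x}\alpha(\x)$, pinning down the normalization. For the balance relations, Lemma \ref{lem:general} gives $\hat{\alpha}(\x,\x') = \hat{\alpha}(\x) - \max_S V^S(\x,\x')$, and I would instead want to use the \emph{minimal} exponent; here I would argue that in the divergence-order computation it is the minimal $V^S$ that matters, effectively replacing $\max_{m} V^m_{ij}$ in Lemma \ref{lem:general} by $\min_S V^S(\x,\x') = V_{\x\x'}$, so that $\hat{\alpha}(\x,\x') = \hat{\alpha}(\x) - V_{\x\x'}$, structurally identical to (\ref{eq:prev_results1}) for BSA. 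Consequently $\{\hat{\alpha}(\x)\}$ solves the identical balance system (\ref{eq:order_balance}) with the same $V_{\x\x'}$, and uniqueness yields $\hat{\alpha}(\x) = \alpha(\x)$ for all $\x$; the transition equality then follows immediately.

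The main obstacle I anticipate is the direction-of-extremum subtlety in Lemma \ref{lem:general}: as stated, that lemma produces $\max_m V^m_{ij}$, whereas the recurrence-order computation for a \emph{sum} of terms with $\epsilon(t) \leq 1$ is actually controlled by the \emph{smallest} exponent (the term that decays slowest and hence whose series is hardest to make converge). I would need to double-check the proof of Lemma \ref{lem:general} and confirm which extremum is correct in my setting, or equivalently re-derive the order of $\sum_S q_{\hat{i},S}\epsilon(t)^{V^S}\mu_{\x}(t)$ directly; the sign of the exponent and the $\epsilon \leq 1$ regime make this the delicate point, and I would want the equality case $S = N_{\hat{i}}$ of (\ref{eq:subset_inequality}) to guarantee that whichever extremum is relevant coincides with the BSA value $V_{\x\x'}$. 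Once that is reconciled, invoking uniqueness from Lemma \ref{lem:org_sim}(3) closes the argument cleanly.
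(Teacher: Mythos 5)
Your proposal follows essentially the same route as the paper's proof: reduce both chains to the balance system of Lemma \ref{lem:org_sim}-3) with the normalization $\max_{\x}\lambda(\x)=\rho$ supplied by Lemma \ref{lem:max_equiv}, show via Eq.~(\ref{eq:subset_inequality}) that the effective RSA exponent in $\hat{\alpha}(\x,\x')=\hat{\alpha}(\x)-V^{N_{\hat{i}}}(\x,\x')$ coincides with the BSA exponent $[-\Delta(\x,\x')]^+$, and invoke uniqueness. The ``direction-of-extremum'' worry you flag is well founded but harmless: the paper's Lemma \ref{lem:general} proof actually establishes divergence of the sum iff the summand with the \emph{slowest-decaying} (smallest) exponent diverges, and the paper then writes $V^{S}(\x,\x')=\Delta_{[S]}(\x,\x')$ rather than $[-\Delta_{[S]}(\x,\x')]^{+}$ when taking the maximum over $S$; these two sign conventions cancel, so the paper's $\max_{S}V^{S}=V^{N_{\hat{i}}}$ and your $\min_{S}[-\Delta_{[S]}]^{+}=[-\Delta]^{+}$ identify the same quantity, and both arguments land on the identical balance system. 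Your resolution --- that with the nonnegative exponents $[-\Delta_{[S]}]^{+}$ the minimum over $S\in S(\x,\x')$ is attained at $S=N_{\hat{i}}$ (which is admissible since $q_{\hat{i},N_{\hat{i}}}>0$) and equals the BSA value --- is the correct reading, so your argument is sound and no gap remains.
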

\begin{proof}
Note that the order balance equation of Eq. (\ref{eq:order_balance}) for the BSA algorithm can be written by
\begin{align*}
    \max_{\x \in A, \x' \in A^c} \alpha(\x,\x') = \max_{\x \in A, \x' \in A^c} \alpha(\x',\x), \; \forall A \subseteq \bX,
\end{align*}
and $\{\alpha(\x)\}$ is the unique solution of $\{\lambda(\x)\}$ of the equations
\begin{align}
    \max_{\x \in A, \x' \in A^c} \lambda(\x) - V^{N_{\hat{i}}}(\x,\x') = \max_{\x \in A, \x' \in A^c} \lambda(\x') - V^{N_{\hat{i}}}(\x',\x), 
    \label{eq:lsa_balance}
\end{align}
for all $A \subseteq \bX$ along with $\max_{\x \in \bX} \lambda(\x) = \rho$.
On the other hand, a similar characterization of $\{\hat{\alpha}(\x)\}$ of RSA algorithm using the order balance equation,
\begin{equation*}
    \max_{\x \in A, \x' \in A^c} \hat{\alpha}(\x,\x') = \max_{\x \in A, \x' \in A^c} \hat{\alpha}(\x',\x), \; \forall A \subseteq \bX,
\end{equation*}
can be written (from Lemma \ref{lem:general}) as the solution $\{\hat{\lambda}(\x)\}$ of
\begin{align}
    & \max_{\x \in A, \x' \in A^c} \left( \hat{\lambda}(\x) - \max_{S \subseteq S(\x,\x')} V^{S}(\x,\x') \right)  \nonumber \\
    & = \max_{\x \in A, \x' \in A^c} \left( \hat{\lambda}(\x') - \max_{S \subseteq S(\x',\x)} V^{S}(\x',\x)\right), \label{eq:balance_eq_rsa}
\end{align}
for all $A \subseteq \bX$ with $\max_{\x \in \bX} \hat{\lambda}(\x) = \rho$ (due to Lemma \ref{lem:max_equiv}).
Observe that for any $S \subseteq S(\x,\x')$, $\x,\x' \in \bX \ (\x\neq \x')$, it holds
\begin{align*}
& V^{N_{\hat{i}}}(\x,\x') = \Delta_{[N_{\hat{i}}]}(\x,\x') \geq  \Delta_{[S]}(\x,\x') = V^{S}(\x,\x'),
\end{align*}
where the inequality is from Eq. (\ref{eq:subset_inequality}).
Since $q_{i,N_i} > 0$ for all $i \in \N$, we have 
\begin{equation*} 
 V^{N_{\hat{i}}}(\x,\x') = \max_{S \in S(\x,\x')} V^S(\x,\x'),
\end{equation*}
and hence, 
\begin{align}
    \hat{\alpha}(\x,\x') &= \hat{\alpha}(\x) - \max_{S \in S(\x,\x')} V^{S}(\x,\x')  \nonumber \\
    &= \hat{\alpha}(\x) - V^{N_{\hat{i}}}(\x,\x'), \label{eq:V_rsa}
\end{align}
for all $\x, \x' \in \bX (\x \neq \x')$, which presents the same forms in Eq. (\ref{eq:lsa_balance}).
Therefore, Lemma \ref{lem:org_sim}-3) implies that the (unique) solution for Eq. (\ref{eq:balance_eq_rsa}) is identical to that of Eq. (\ref{eq:lsa_balance}), i.e., $\alpha(\x) = \hat{\alpha}(\x)$ for all $\x \in \bX$. Also, $\alpha(\x,\x') = \hat{\alpha}(\x,\x')$ holds for all $\x,\x' \in \Omega \ (\x \neq \x')$ from eq. (\ref{eq:V_rsa}).
\end{proof}

The result follows from the above lemma and Lemma \ref{lem:org_sim}-4) and \ref{lem:org_sim}-5).
\end{proof}

\end{document}